\let\pa\partial  
\let\na\nabla  
\let\eps\varepsilon  
\newcommand{\N}{{\mathbb N}}  
\newcommand{\R}{{\mathbb R}} 
\newcommand{\diver}{\operatorname{div}}
\newcommand{\E}{\mathbf{E}}
\newcommand{\s}{\mathbf{s}}
\newcommand{\m}{\mathbf{m}}
\renewcommand{\H}{\mathbf{H}}
\renewcommand{\u}{\mathbf{u}}
\newcommand{\Je}{\mathbf{J}_e}
\newcommand{\Js}{\mathbf{J}_s}
\newcommand{\bnu}{\boldsymbol\nu}
\newcommand{\curl}{\operatorname{curl}}
\newtheorem{theorem}{Theorem}   
\newtheorem{lemma}[theorem]{Lemma}   
\newtheorem{proposition}[theorem]{Proposition}   
\newtheorem{remark}[theorem]{Remark}   
\newtheorem{corollary}[theorem]{Corollary}
\begin{document}  

\title[A spin drift-diffusion Maxwell-Landau-Lifshitz system]{
Analysis of a coupled spin drift-diffusion
Maxwell-Landau-Lifshitz system}

\author[N. Zamponi]{Nicola Zamponi}
\address{Institute for Analysis and Scientific Computing, Vienna University of  
Technology, Wiedner Hauptstra\ss e 8--10, 1040 Wien, Austria}
\email{nicola.zamponi@tuwien.ac.at}

\author[A. J\"{u}ngel]{Ansgar J\"{u}ngel}
\address{Institute for Analysis and Scientific Computing, Vienna University of  
Technology, Wiedner Hauptstra\ss e 8--10, 1040 Wien, Austria}
\email{juengel@tuwien.ac.at}

\date{\today}

\thanks{The authors acknowledge partial support from   
the Austrian Science Fund (FWF), grants P24304, P27352, and W1245}  

\begin{abstract}
The existence of global weak solutions to a coupled spin drift-diffusion 
and Maxwell-Landau-Lifshitz system is proved. The equations are considered
in a two-di\-men\-sional magnetic layer structure and are supplemented with 
Dirichlet-Neumann boundary conditions. The spin drift-diffusion model 
for the charge density and spin density vector
is the diffusion limit of a spinorial Boltzmann equation for a vanishing spin
polarization constant. The Maxwell-Landau-Lifshitz system consists of the
time-dependent Maxwell equations for the electric and magnetic fields and
of the Landau-Lifshitz-Gilbert equation for the local magnetization,
involving the interaction between magnetization and spin density vector.
The existence proof is based on a regularization procedure, $L^2$-type estimates,
and Moser-type iterations which yield the 
boundedness of the charge and spin densities.
Furthermore, the free energy is shown to be nonincreasing in time
if the magnetization-spin interaction constant in the Ladau-Lifshitz equation is 
sufficiently small.
\end{abstract}

\keywords{Spin drift-diffusion equations, Maxwell-Landau-Lifshitz system,
existence of weak solutions, von-Neumann entropy, bounded weak solutions.} 

\subjclass[2010]{35K51, 35Q61, 35Q60, 82D40.}  

\maketitle

%%%%%%%%%%%%%%%%%%%%%%%%%%%%%%%%%%%%%%%%%%%%%%%%%%%%%%%%%%%%%%%%%%%%%%%%%%%%%%%

\section{Introduction}\label{sec.intro}

Magnetic devices, such as magnetic sensors and hard disk read heads,
typically consist of ferromagnetic/nonmagnetic layer structures. A model
for magnetic multi-layers was first introduced by Slonczewski \cite{Slo96}.
This model is well suited for Magnetoresistive Random Access Memory (MRAM) devices
but it is less appropriate for current-driven domain wall-motion. A more general
approach is to introduce the spin accumulation coupled to the magnetization
dynamics. The evolution of the magnetization is modeled by the 
Landau-Lifshitz (-Gilbert) equation \cite{ZLF02}. 
When electrodynamic effects cannot be neglected (like in high-frequency regimes), 
this description needs to be coupled to the Maxwell equations. 
In this paper, we analyze for the first time a coupled spin drift-diffusion 
Maxwell-Landau-Lifshitz system in two space dimensions with physically
motivated boundary conditions.

Let us describe our model in more detail. We consider a three-layer semiconductor 
structure $\Omega\subset\R^2$ consisting
of two ferromagnetic regions $\omega_1$, $\omega_2\subset\Omega$, separated
by a nonmagnetic interlayer $\Omega\backslash\omega$, where 
$\omega=\omega_1\cup\omega_2$ is the union of magnetic layers \cite{AHPP14}.

\smallskip
{\bf Landau-Lifshitz-Gilbert equation.}
The dynamics of the magnetization $\m=(m_1,$ $m_2,m_3)$
is governed by the Landau-Lifshitz-Gilbert (LLG) equation 
\begin{equation}\label{1.m}
  \pa_t\m = \m\times(\Delta\m+\H+\beta\s) 
	- \alpha\m\times(\m\times(\Delta\m+\H+\beta\s))
	\quad\mbox{in }\omega,\ t>0,
\end{equation}
where the effective field $\H_{\rm eff}=\Delta\m+\H$ consists of the sum of the
exchange field contribution $\Delta\m$ and the magnetic field $\H$, and
$\alpha>0$ denotes the Gilbert damping constant.
The additional term $\beta\s$ models the interaction between
the magnetization $\m$ and spin accumulation $\s$ with strength $\beta>0$
\cite{Cim07,ZLF02}. We choose the initial and boundary conditions
\begin{equation}\label{1.ibc.m}
  \m(0)=\m^0\quad\mbox{in }\omega, \quad
	\na\m\cdot\bnu = 0\quad\mbox{on  }\omega,\ t>0,
\end{equation}
where $\bnu$ is the outward unit normal on $\pa\omega$, we write $\m(0)=\m(\cdot,0)$, 
and the notation $\na\m\cdot\bnu=0$ means that $\na m_i\cdot\bnu=0$ for $i=1,2,3$. 
The Neumann conditions were also used in, e.g., \cite{AHPP14,GaWa07}. 
We set $\m=0$ in $\Omega\backslash\omega$.

The existence and non-uniqueness of weak solutions to the LLG equation goes back to 
\cite{AlSo92,Vis85}. The local existence of a unique strong solution was proven
in \cite{CaFa01}. In two space dimensions and for sufficiently small initial data,
the strong solution is, in fact, global in time \cite{CaFa01}. 
For general initial data, the two-dimensional solution may develop finitely
many point singularities after finite time; see \cite{Har04} for a discussion.
The existence of weak solutions in three space dimensions with physically 
motivated boundary
conditions was shown in \cite{BFFG14}, based on a finite-element approximation.
For a complete review on analytical results, we refer to \cite{Cim08,KrPr06}.

\smallskip
{\bf Maxwell equations.} The Maxwell equations are given by the time-dependent
Amp\`ere and Faraday laws for the electric and magnetic fields $\E=(E_1,E_2,E_3)$ 
and $\H=(H_1,H_2,$ $H_3)$, respectively,
\begin{equation}
  \pa_t\E - \curl\H = \Je, \quad \pa_t\H + \curl\E = -\pa_t\m 
	\quad\mbox{in }\Omega,\ t>0, \label{1.M1}
\end{equation}
and by the Gauss laws
\begin{equation}
  \diver\E = \rho-C(x), \quad \diver(\H+\m) = 0 \quad\mbox{in }\Omega,\ t>0.
	\label{1.M2}
\end{equation}
Here, $\Je$ is the electron current density, $\rho$ the electron charge density,
and $C(x)$ the doping concentration characterizing the device under
consideration. We assume that the boundary $\pa\Omega$ splits into two parts:
the Ohmic contacts $\Gamma_D$ and the union $\Gamma_N$ of the insulating parts,
with $\pa\Omega=\Gamma_D\cup\Gamma_N$. Then the initial and boundary conditions
of $\E$ and $\H$ are given by
\begin{align}
  & \E(0) = \E^0, \quad \H(0) = \H^0\quad\mbox{in }\Omega, 
	\label{1.ic.M} \\
  & \E\times\bnu = 0 \quad\mbox{on }\Gamma_D,\ t>0, \quad
	\H\times\bnu = 0 \quad\mbox{on }\Gamma_N,\ t>0,	\label{1.bc.M1} \\
	& \E\cdot\bnu = 0 \quad\mbox{on }\Gamma_N,\ t>0. \label{1.bc.M2}
\end{align}
The existence analysis (for given
and smooth $\Je$ and $\pa_t\m$) may be based on Kato's theory of 
quasilinear evolution equations of hyperbolic type \cite{Mil82} or on
semigroup theory \cite{Joc96}; also see Section \ref{sec.max}.

Coupled Maxwell and LLG equations were intensively studied in the literature.
For instance, the Maxwell-Landau-Lifshitz system in three space dimensions 
with periodic boundary conditions 
was investigated in \cite{GuSu97}. Carbou and Fabrie \cite{CaFa01a} proved the
existence of weak solutions to the LLG equation, coupled to Maxwell's equations,
in the whole space $\R^3$. The existence of spatially periodic strong solutions 
in three dimensions and their local uniqueness were proved in \cite{Cim07}. 
The solutions are only partially regular (i.e.\ smooth except on a low-dimensional
set) because of possible vortices or phase transitions. We refer to
\cite{DiLi09} for the two-dimensional case and to \cite{DiGu08,DLW09} for three 
space dimensions.

\smallskip
{\bf Spin drift-diffusion system.} We consider the spin drift-diffusion equations
for the charge density $\rho$ and the spin density vector $\s=(s_1,s_2,s_3)$
\begin{align}
  & \pa_t\rho - \diver \Je = 0, \quad \Je = D(\na\rho - \rho\E), \label{1.rho} \\
	& \pa_t\s - \diver\Js + \gamma \m\times\s = -\s/\tau, \quad
	\Js =  D(\na\s - \s\otimes\E), \label{1.s}
\end{align}
where $D>0$ is the diffusivity constant, $\Js$ the spin current density vector,
$\gamma>0$ is the strength of the effective magnetic field, and $\tau>0$
denotes the spin-flip relaxation time. The term $\gamma\m\times\s$ causes the
spin density vector to rotate around the magnetization, while the spin-flip
relaxation term leads, in the absence of other forces, to exponential decay
to the equilibrium spin density vector $\s_{\rm eq}=0$. 
We assume that the densities $\rho$ and $\s$ are prescribed 
on $\Gamma_D$ (Ohmic contacts),
while there are no-flux boundary conditions on $\Gamma_N$
(insulating boundary). This results in the initial and boundary conditions
\begin{align}
  \rho(0) = \rho^0, \quad \s(0) = \s^0 &\quad\mbox{in }\Omega, 
	\label{1.ic.rho} \\
  \rho = \rho_D, \quad \s = 0 &\quad \mbox{on }\Gamma_D,\ t>0, \label{1.bc.rhoD} \\
	\Je\cdot\bnu = 0, \ \Js\cdot\bnu = 0 &\quad\mbox{on }\Gamma_N,\ t>0.
	\label{1.bc.rhoN}
\end{align}
The spin current density is a $3\times 3$ matrix with rows
${\mathbf J}_{s,i}=\na s_i-s_i\E$ for $i=1,2,3$. Accordingly,
$\Js\cdot\bnu$ is a vector in $\R^3$ consisting of the elements 
${\mathbf J}_{s,i}\cdot\bnu=0$, $i=1,2,3$.

Spin-polarized drift-diffusion models were analyzed only recently in the
literature. Glitz\-ky \cite{Gli08} proves the existence and uniqueness of weak 
solutions to a two-dimensional transient drift-diffusion system for 
spin-up and spin-down densities.
The stationary problem was solved in three space dimensions in \cite{GaGl10}.
These models were derived from the spinor Boltzmann equation in the diffusion
limit with strong spin-orbit coupling in \cite{ElH14}.

More detailed information can be obtained by introducing the spin density.
Spin-vector drift-diffusion equations can be derived from the 
spinor Boltzmann equation by assuming 
a moderate spin-orbit coupling \cite{ElH14}. Projecting the
spin-vector density in the direction of the magnetization, we recover
the two-component drift-diffusion system as a special case. 
In \cite{ElH14}, the scattering rates are supposed to be scalar quantities. 
Assuming that the scattering rates are positive definite Hermitian matrices,
a more general matrix drift-diffusion model was derived in \cite{PoNe11}.
The global existence of weak solutions to this model was shown in \cite{JNS15}.
An energy-dissipative finite-volume discretization was presented in \cite{CJS15}.

Equations \eqref{1.rho}-\eqref{1.s} result from the cross-diffusion
model in \cite{PoNe11} by choosing a vanishing spin polarization constant. 
By this choice, the diffusion matrix becomes diagonal which makes our analysis
possible. For a more general
spin drift-diffusion LLG model, but without coupling to Maxwell's equations and with 
saturating drift velocity, we refer to \cite{Zam14}.

In the physical literature, also other equations for the spin density vector have been
suggested. In \cite[Formula (8)]{Mia08}, the spin density is defined as the
difference of the spin-up and spin-down densities. Thus, the underlying equation
is a two-component model which is a special case of the general model.
Starting from kinetic equations for the charge and spin components
of the Wigner-transformed density matrix, Lueffe et al.\ \cite[Formula (54)]{LKN11}
derived a spin diffusion equation for weak spin-orbit interaction or strong 
scattering.
%$$
%  \pa_t\s = D\Delta\s + f(\s,\na\s) - \s/\tau,
%$$
%where $f(\s,\na\s)$ is some function depending on $\s$ and $\na\s$.
Another derivation employs a $SU(2)$ gauge field theoretical description of the
spin-orbit coupling and the Heisenberg field operators for the definition
of the spin density \cite[Formulas (1)-(4)]{SRV14}. The resulting equation
is similar to \eqref{1.s} but the spin current density also depends on the
charge current.
%$$
%  \pa_t\s - \diver\Js = -\s/\tau, \quad
%  \Js=D\na\s + \s\otimes\E + \gamma\Je.
%$$
Finally, assuming that the diffusivity in the drift-diffusion equation for
the density matrix is proportional to the magnetization vector, the authors
in \cite{ZLF02} obtain \eqref{1.s} with a spin current density whose drift
term equals $\m\otimes\E$ instead of $\s\otimes\E$ as in \eqref{1.s}.
The former drift term can be derived from the Wigner equation in the diffusion
limit by approximating the Wigner function appropriately \cite[Formula (23)]{CGY15}.
%the following model was suggested in \cite{ZLF02}:
%$$
%  \pa_t\s - \diver\Js +\gamma\m\times\s = -\s/\tau, \quad
%	\Js = D\na\s - \beta\m\otimes\E.
%$$
%Compared to \eqref{1.s}, the only difference is the drift term $\beta\m\times\E$.
We stress the fact that the model \eqref{1.rho}-\eqref{1.s} is derived from
the spinor Boltzmann equation without heuristic arguments. 
%Therefore, we prefer to use equation \eqref{1.s} instead of the other equations 
%appearing in the literature.

\smallskip
{\bf Main results.}
We show that there exists a global-in-time weak solution to the coupled
spin drift-diffusion Maxwell-LLG system. Our assumptions are as follows:
\begin{align}
  & \omega\subset\Omega\subset\R^2\mbox{ are bounded domains with smooth boundaries},
	\label{hypo.dom} \\
  & \alpha,\,\beta,\,\gamma,\,D,\,\tau>0, \quad C\in L^\infty(\Omega),
	\label{hypo.CD} \\
	& \rho_D,\ \rho^0,\ \s^0,\ \E^0,\ \H^0\in H^1(\Omega), \quad
	\m^0\in H^1(\omega), \quad |\m^0|=1\ \mbox{in }\omega, \label{hypo.bc} \\
	&   \diver\E^0 = \rho^0-C(x), \quad \diver(\H^0+\m^0)=0\ \mbox{in }\Omega, \quad
	\E^0\cdot\bnu = 0\ \mbox{on }\Gamma_N. \label{hypo.ic}
\end{align} 
We also suppose that $\pa\Omega=\Gamma_D\cup\Gamma_N$, 
$\Gamma_D\cap\Gamma_N=\emptyset$, and $\Gamma_N$ is open and has positive
measure in $\pa\Omega$. To simplify the notation, we write sometimes
$\mathbf{u}\in H^1(\Omega)$ instead of $\u\in H^1(\Omega)^3$ for vector-valued
functions $\u$. We denote by $H_D^1(\Omega)$ the space of
all functions in $H^1(\Omega)$ with zero trace on $\Gamma_D$ and by
$H_D^1(\Omega)'$ its dual space.

Let us discuss assumptions \eqref{hypo.dom}-\eqref{hypo.ic}. 
The restriction to two space dimensions 
is (only) needed in the uniqueness proof for the regularized LLG equation 
\eqref{2.m}. This property is required to obtain a well-defined fixed-point operator.
In \eqref{hypo.ic}, we suppose that equations \eqref{1.M2} and 
\eqref{1.bc.M2} hold initially. These properties allow us to conclude the 
validity of \eqref{1.M2} and \eqref{1.bc.M2} from \eqref{1.M1} and \eqref{1.bc.M1}
(see e.g.\ \cite[p.~435f.]{DaLi90}).

The {\em first main result} is the following theorem.

\begin{theorem}[Existence of global weak solutions]\label{thm.ex}
Let assumptions \eqref{hypo.dom}-\eqref{hypo.ic} hold.
Then there exists a weak solution to \eqref{1.m}-\eqref{1.bc.rhoN} satisfying
\begin{align*}
  & \rho\ge 0\quad\mbox{in }\Omega, \quad 
	\m=0\quad\mbox{in }\Omega\backslash\omega,\ t>0, \\
  & \rho,\ \s\in L^2_{\rm loc}(0,\infty;H^1(\Omega))\cap 
	L^\infty_{\rm loc}(0,\infty;L^\infty(\Omega)), \quad
	\pa_t\rho,\ \pa_t\s\in L^2_{\rm loc}(0,\infty;H_D^1(\Omega)'), \\
	& \E,\ \H\in C^0([0,\infty);L^2(\Omega)), \\
	& \m\in L^\infty_{\rm loc}(0,\infty;H^1(\omega)), \quad
	\pa_t\m\in L^2_{\rm loc}(0,\infty;L^2(\omega)), \quad |\m|=1\mbox{ in }\omega.
\end{align*}
\end{theorem}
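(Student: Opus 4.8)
The plan is to construct a solution by a regularization and fixed-point scheme, combining $L^2$-type energy estimates with Moser iteration, in the spirit of the existence theory quoted above. For a parameter $\eps>0$ I would regularize the system by replacing the degenerate LLG equation \eqref{1.m} with a uniformly parabolic one (a Ginzburg--Landau penalization of the constraint $|\m|=1$, together with an $\eps$-viscosity if convenient) and by truncating the electric field at level $1/\eps$ inside the drift terms of $\Je$ and $\Js$. For fixed $\eps$ one solves the regularized, decoupled system by a fixed-point argument: given $(\bar\m,\bar\E)$ in a bounded convex subset of an $L^2$-space, solve the linear parabolic drift-diffusion problem \eqref{1.rho}--\eqref{1.s} for $(\rho,\s)$, then the linear Maxwell system \eqref{1.M1}--\eqref{1.M2} for $(\E,\H)$ via semigroup/Kato theory as indicated after \eqref{1.bc.M2}, and finally the regularized LLG equation for $\m$; compactness of the induced operator follows from parabolic smoothing and the Aubin--Lions lemma. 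The operator is well defined because, \emph{in two space dimensions}, the regularized LLG equation has a unique solution: here one uses the embedding $H^1(\omega)\hookrightarrow L^p(\omega)$ for all $p<\infty$ to control the quadratic nonlinearities in the Gronwall estimate for the difference of two solutions.

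\textbf{A priori estimates uniform in $\eps$.} The structural point for the drift-diffusion part is that the $\E$-dependent drift terms in \eqref{1.rho}--\eqref{1.s} are in divergence form: testing with powers of $\rho-\rho_D$ and of $|\s|$ and integrating by parts converts them into zeroth-order contributions governed by $\diver\E=\rho-C$, with no derivative of $\E$ appearing (this is where the diagonal form of the diffusion matrix is essential). After establishing $\rho\ge0$ by a Stampacchia truncation, a \textbf{Moser iteration} over a geometric sequence of exponents then yields $L^\infty_{\rm loc}(0,\infty;L^\infty(\Omega))$ bounds for $\rho$ and $\s$, uniform in $\eps$, the underlying $L^2$-estimate also providing $\rho,\s\in L^2_{\rm loc}(0,\infty;H^1(\Omega))$ and, from the equations, $\pa_t\rho,\pa_t\s\in L^2_{\rm loc}(0,\infty;H_D^1(\Omega)')$. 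With $\rho,\s$ bounded, testing the Faraday and Amp\`ere laws with $\H$ and $\E$ (boundary terms vanishing by \eqref{1.bc.M1}--\eqref{1.bc.M2}) bounds $(\E,\H)$ in $L^\infty_{\rm loc}(0,\infty;L^2(\Omega))$ once $\pa_t\m$ is controlled; the latter comes from the LLG energy estimate, obtained by testing the regularized equation with $-\Delta\m$ and with $\pa_t\m$ and exploiting the cross-product structure so that the top-order terms reproduce a Gilbert dissipation $c\|\pa_t\m\|^2_{L^2(\omega)}$, the contributions of $\H$ and $\beta\s$ being lower order; this also gives $\m\in L^\infty_{\rm loc}(0,\infty;H^1(\omega))$, and one checks that the penalization term tends to zero as $\eps\to0$, so that $|\m|=1$ is recovered. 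Since the Maxwell, LLG and drift-diffusion balances are coupled, they are combined into a single Gronwall inequality whose constants may grow in $t$, which is why the bounds are only local in time; for $\eps$ small the truncation of $\E$ in the currents is inactive, so the estimates close independently of $\eps$.

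\textbf{Passage to the limit.} From the $\eps$-uniform bounds I extract subsequences: $\rho_\eps,\s_\eps$ converge weakly in $L^2_{\rm loc}(0,\infty;H^1(\Omega))$ and, by Aubin--Lions, strongly in $L^2_{\rm loc}(0,\infty;L^2(\Omega))$ and a.e.; $\m_\eps$ converges weakly-$*$ in $L^\infty_{\rm loc}(0,\infty;H^1(\omega))$ and, as $\pa_t\m_\eps$ is bounded in $L^2_{\rm loc}(0,\infty;L^2(\omega))$, strongly in $L^2_{\rm loc}(0,\infty;L^2(\omega))$ and a.e., so that $|\m|=1$ survives in the limit; $\E_\eps,\H_\eps$ are bounded in $L^\infty_{\rm loc}(0,\infty;L^2(\Omega))$, and since the Maxwell equations give $\pa_t\E_\eps,\pa_t\H_\eps$ bounded in $L^2_{\rm loc}(0,\infty;H^{-1}(\Omega))$, the Aubin--Lions--Simon lemma gives strong convergence in $C^0([0,T];H^{-1}(\Omega))$ for every $T>0$, after which the Maxwell energy identity upgrades this to the stated $C^0([0,\infty);L^2(\Omega))$ regularity. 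All products in the weak formulations are identified by pairing a strongly convergent factor with a weakly convergent one: $\rho_\eps\E_\eps$ and $\s_\eps\otimes\E_\eps$ in the currents (strong densities, weak field), $\m_\eps\times\s_\eps$ and $\m_\eps\times\H_\eps$ in the LLG and spin equations (strong $\m_\eps$), and $\pa_t\m_\eps$ on the right-hand side of \eqref{1.M1} (weak $L^2$).

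\textbf{Main obstacle.} The delicate step is the limit in the exchange nonlinearities of the LLG equation, $\m\times\Delta\m$ and $-\alpha\,\m\times(\m\times\Delta\m)$, since $\na\m_\eps$ is bounded only in $L^\infty_{\rm loc}(0,\infty;L^2(\omega))$ with no additional compactness. I would use the two-dimensional Gagliardo--Nirenberg inequality to upgrade the convergence of $\m_\eps$ to strong convergence in $L^q_{\rm loc}(0,\infty;L^q(\omega))$ for every $q<\infty$, rewrite $\m\times\Delta\m=\diver(\m\times\na\m)$ in the weak formulation, and use the identity $\m\times(\m\times\mathbf{v})=(\m\cdot\mathbf{v})\m-\mathbf{v}$, valid when $|\m|=1$, to reduce the double cross product to $\Delta\m$, the term $|\na\m|^2\m$ and lower-order terms; one then passes to the limit in the bilinear expression $\m\times\na\m$ --- strong $L^q$ against weak $L^2$ --- tested with smooth functions. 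Showing that the penalization and viscosity contributions vanish and that no defect measure appears in the $|\na\m|^2$ term is the crux, and it dictates the precise weak (non-divergence) formulation of the LLG equation that one must use.
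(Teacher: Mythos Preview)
Your overall architecture—regularize, fixed-point, uniform $L^2$ and $L^\infty$ estimates, pass to the limit—matches the paper's, and you correctly identify the structural point that the drift terms integrate by parts against $\diver\E=\rho-C$. But your specific truncation destroys precisely this structure. You truncate $\E$ at level $1/\eps$ inside $\Je,\Js$; then, testing \eqref{1.s} with $|\s|^{p-2}\s$ and integrating the drift by parts produces a term $\int_\Omega D\,|\s|^p\,\diver(\mbox{truncated }\E)\,dx$, and the divergence of the pointwise cutoff of $\E$ has no relation to $\rho-C$: the Gauss law holds for $\E$, not for its truncation. So the Moser bound cannot close independently of $\eps$. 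The assertion that ``for $\eps$ small the truncation of $\E$ is inactive'' is also unfounded, since $\E$ is only in $L^\infty_tL^2_x$, never in $L^\infty$. The paper instead truncates the \emph{densities}, writing $[\rho]_M\E$ and $[|\s|]_M\frac{\s}{|\s|}\otimes\E$ in the drifts with a parameter $M$ kept independent of $\eps$; the untruncated field $\E$ survives, the Gauss law applies verbatim in the integration by parts, and the Moser iteration (Lemma~\ref{lem.infty}) yields bounds depending only on $\|C\|_{L^\infty}$ and the data, after which $M$ is removed by choosing it above those bounds.

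Two further discrepancies matter. First, the paper does not use a Ginzburg--Landau penalization—note that a term $\eps^{-2}(1-|\m|^2)\m$ is annihilated by $\m\times(\cdot)$, so it would not enter \eqref{1.m} in the naive way—but simply adds $-\eps\Delta\m$ to \eqref{1.m}; then $|\m_\eps|\le 1$ follows from testing with $|\m|^{p-1}\m$ and letting $p\to\infty$, and $|\m|=1$ is obtained only \emph{after} $\eps\to 0$, by taking the inner product of the limit equation with $\m$. Second, the paper smooths the Maxwell sources $\na\rho$ and $\pa_t\m$ by linear regularization operators $R_\eps^x,R_\eps^t$ so that the semigroup step in Lemma~\ref{lem.max} sees $C^1$ data; without such a device your fixed-point loop would need $\pa_t\bar\m\in L^2$, which is unavailable when $\bar\m$ lives only in an $L^2$-type space. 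A side effect of this smoothing is that $\diver\E=\rho-C$ is verified only \emph{after} $\eps\to 0$ (end of Section~\ref{sec.eps}), which is why the paper runs the Moser iteration at that stage and decouples the truncation level $M$ from $\eps$.
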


The $L^\infty$ bounds on $\rho$ and $\s$ can be shown to be 
uniform in time; see Remark \ref{rem.Linfty}.

The proof of this theorem is based on a combination of semigroup techniques
for the Maxwell equations \eqref{1.M1}, a Galerkin approximation for the
LLG equation \eqref{1.m}, and $L^2$ estimates for the spin drift-diffusion
model \eqref{1.rho}-\eqref{1.s}. Note that it is sufficient to solve
\eqref{1.M1} with \eqref{1.ic.M}-\eqref{1.bc.M1} as \eqref{1.M2} with \eqref{1.bc.M2}
are consequences of the former equations.
Since $\Je$ and $\pa_t\m$ are not regular
a priori, we approximate these terms by regularizing $\na\rho$, $\pa_t\m$
and truncating $\rho$, $\s$ in the drift terms in \eqref{1.rho}-\eqref{1.s}, 
respectively. This regularization is
similar to that employed by Jochmann \cite{Joc96} for a coupled Maxwell
drift-diffusion system (without spin). The challenge in the proof is to remove the
regularization and truncation. For the de-regularization limit, 
we derive uniform estimates for the variables by showing that the functional
\begin{equation}\label{1.S}
  S(t) = \frac12\int_\Omega\big((\rho-\rho_D)^2 + |\s|^2 + |\E|^2 + |\H|^2\big)dx
	+ \frac12\int_\omega|\na\m|^2 dx
\end{equation}
satisfies the inequality
$$
  S(t) + c_1\int_0^t\int_\Omega\big(|\na\rho|^2 + |\na\s|^2\big)dx
	+ c_2\int_0^t\int_\omega|\pa_t\m|^2 dx \le c_3(T), \quad t\in(0,T),
$$
where $c_1$, $c_2$, $c_3(T)>0$ are some constants which are independent of the
solution. Further details on the proof are given in Section \ref{sec.strat}.
In order to remove the truncation, we derive $L^\infty$ estimates for $\m$, 
$\rho$, and $\s$ by using a Moser-type iteration procedure.

The functional $S(t)$ is not the energy of the system. The (relative)
free energy consists of the von-Neumann energy for the spin system, 
the electromagnetic energy, and the exchange energy of the magnetization:
\begin{align}
  E(t) &= \frac12\int_\Omega\big(\rho_+(\log\rho_+-1) + \rho_-(\log\rho_--1)
	- 2\log\rho_D(\rho-\rho_D)\big)dx \label{1.E} \\
	&\phantom{xx}{}+ \frac12\int_\Omega\big(|\E-\log\rho_D|^2 + |\H|^2\big)dx
	+ \frac12\int_\omega|\na\m|^2 dx, \nonumber
\end{align}
where $\rho_\pm=\rho\pm|\s|$ (see Section \ref{sec.en}). 
This formulation implicitly assumes that $\rho\ge|\s|$. 
Our second main result is the proof that $E(t)$ is nonincreasing in time
under the conditions that the interaction parameter $\beta>0$ is sufficiently
small and the solution is smooth and satisfies $\rho>|\s|$. 
This shows that the coupled system dissipates the free energy.
The constraint on the parameter
$\beta$ may come from the fact that the term $\beta\s$ is introduced
in the LLG equation only heuristically, but we leave further investigations
to future research.

The paper is organized as follows. The strategy of the existence proof is
explained in Section \ref{sec.strat} and the full proof is given in Section
\ref{sec.ex}. We conclude in Section \ref{sec.en} with the monotonicity 
proof for the free energy $E(t)$.

%%%%%%%%%%%%%%%%%%%%%%%%%%%%%%%%%%%%%%%%%%%%%%%%%%%%%%%%%%%%%%%%%%%%%%%%%%%%%%

\section{Strategy of the proof of Theorem \ref{thm.ex}}\label{sec.strat}

In order to prove Theorem \ref{thm.ex}, we first consider a
truncated and regularized problem. For this, let $T>0$, $\eps>0$, $M>0$ and set
$[z]_M:=\min\{M,\max\{0,z\}\}$ for $z\in\R$. We wish to prove the existence of weak
solutions to 
\begin{align}
  & \pa_t\rho - \diver(D(\na\rho - [\rho]_M\E)) = 0, \label{2.rho} \\
	& \pa_t\s - \diver\left(D\left(\na\s - [|\s|]_M\frac{\s}{|\s|}\otimes\E\right)\right)
	+ \gamma\m\times[|\s|]_M\frac{\s}{|\s|} = -\frac{\s}{\tau}, \label{2.s} \\
	& \pa_t\E - \curl\H = D(\na R_\eps^x(\rho)-[\rho]_M\E), \label{2.E} \\
	& \pa_t\H + \curl\E = -\pa_t R_\eps^t(\m)\quad\mbox{in }\Omega\times(0,T),
	\label{2.H}, \\
	& \pa_t\m - \eps\Delta\m = \m\times(\Delta\m+\H+\beta\s) 
	- \alpha\m\times(\m\times(\Delta\m+\H+\beta\s))\ \mbox{in }\omega\times(0,T)
	\label{2.m}
\end{align}
with the initial and boundary conditions \eqref{1.ibc.m},
\eqref{1.ic.M}-\eqref{1.bc.M1}, and \eqref{1.ic.rho}-\eqref{1.bc.rhoN}. 
In the Maxwell equations \eqref{2.E}-\eqref{2.H}, $R_\eps^x$ and $R_\eps^t$
are two families of linear regularization operators acting on functions of $x$ and
$t$, respectively, satisfying for all $u\in L^2(\Omega)$ and $v\in L^2(0,T)$,
\begin{align}
  & \|R_\eps^x(u)\|_{C^1(\overline\Omega)}\le k_\eps\|u\|_{L^2(\Omega)}, 
	%&& \|R_\eps^x(u)\|_{L^2(\Omega)} \le k_0\|u\|_{L^2(\Omega)}, 
	\label{2.Rx1} \\	
	& \|R_\eps^x(u)\|_{H^1(\Omega)}\le k_0\|u\|_{H^1(\Omega)}, &&
	\lim_{\eps\to 0}\|R_\eps^x(u)-u\|_{L^2(\Omega)} = 0, \label{2.Rx2} \\
	& \|R_\eps^t(v)\|_{C^1([0,T])}\le k_\eps\|v\|_{L^2(0,T)}, 
	%&& \|R_\eps^t(v)\|_{L^2(0,T)} \le k_0\|v\|_{L^2(0,T)}, 
	\label{2.Rt1} \\
	& \|R_\eps^t(v)\|_{H^1(0,T)}\le k_0\|v\|_{H^1(0,T)}, &&
	\lim_{\eps\to 0}\|R_\eps^t(v)-v\|_{L^2(0,T)} = 0, \label{2.Rt2}
\end{align}
where $k_\eps>0$ depends on $\eps$ but $k_0>0$ is independent of $\eps$.
The space-regularization operator $R_\eps^x$ was introduced in \cite[p.~665f]{Joc96},
where also their existence and the above properties were proved. The 
time-regularization operator $R_\eps^t$ can be defined in a similar way.

In the following, we abbreviate 
$X=L^2(0,T;L^2(\Omega))^4$ and $Y=C^0([0,T];L^2(\Omega))^3$.

The {\em first step} of the proof of Theorem \ref{thm.ex} is the application of
the Leray-Schauder fixed-point theorem to the map
\begin{equation*}
  F:X\times Y\times[0,1] \to X\times Y, \quad
	(\rho,\s;\m;\sigma) \mapsto (\rho^*,\s^*;\m^*),
\end{equation*}
which is defined as follows (details will be given in the following subsections).
Let $(\rho,\s;\m;$ $\sigma)\in X\times Y\times [0,1]$ be given. 
\begin{enumerate}[{\bf I.}]
\item Solve the regularized Maxwell equations
\begin{align}
  \pa_t\E - \curl\H &= \sigma D(\na R_\eps^x(\rho)-[\rho]_M\E), \label{fix.M1} \\
	\pa_t\H + \curl\E &= -\sigma\pa_t R_\eps^t(\m)\quad\mbox{in }\Omega\times(0,T),
	\label{fix.M2}
\end{align}
with initial and boundary conditions 
\begin{align}
  & \E(0) = \sigma\E^0, \quad \H(0) = \sigma\H^0\quad\mbox{in }\Omega, 
	\label{fix.ic.M}\\
  & \E\times\bnu = 0 \quad\mbox{on }\Gamma_D,\ t>0, \quad
	\H\times\bnu = 0 \quad\mbox{on }\Gamma_N,\ t>0, \label{fix.bc.M}
\end{align}
and obtain $(\E,\H)\in C^0([0,T];L^2(\Omega))$.
\item Solve the regularized (nonlinear) LLG equations
\begin{align}
  \pa_t\m^* - \eps\Delta\m^* &= \m^*\times(\Delta\m^*+\sigma\H+\sigma\beta\s) 
	\label{fix.m} \\
	&\phantom{xx}{}- \alpha\m^*\times(\m^*\times(\Delta\m^*+\sigma\H+\sigma\beta\s))
	\quad\mbox{in }\omega\times(0,T) \nonumber
\end{align}
with initial and boundary conditions
\begin{equation}\label{fix.bc.m}
  \m^*(0)=\sigma\m^0\quad\mbox{in }\omega, \quad
	\na\m^*\cdot\bnu = 0\quad\mbox{on  }\omega,\ t>0,
\end{equation}
and obtain
$\m^*\in L^\infty(0,T;H^1(\Omega))\cap H^1(0,T;L^2(\Omega))$ satisfying
$|\m^*|\le 1$ in $\Omega\times(0,T)$.
\item Solve the linearized spin drift-diffusion equations
\begin{align}
  & \pa_t\rho^* - \diver(D(\na\rho^* - \sigma[\rho^*]_M\E)) = 0
	\quad\mbox{in }\Omega\times(0,T), \label{fix.rho} \\
	& \pa_t\s^* - \diver\left(D\left(\na\s^* 
	- \sigma[|\s^*|]_M\frac{\s^*}{|\s^*|}\otimes\E\right)\right)
	+ \sigma\gamma\m^*\times[|\s^*|]_M\frac{\s^*}{|\s^*|} 
	= -\frac{\s^*}{\tau}, \label{fix.s}
\end{align}
with the initial and boundary conditions 
\begin{align*}
  \rho^*(0) = \sigma\rho^0, \quad \s^*(0) = \sigma\s^0
	&\quad\mbox{in }\Omega, \\
	\rho^*=\sigma\rho_D, \quad \s^*=0 &\quad\mbox{on }\Gamma_D,\ t>0, \\
	(\na\rho^* - \sigma[\rho^*]_M\E)\cdot\bnu 
	= D\left(\na\s^* - \sigma[|\s^*|]_M\frac{\s^*}{|\s^*|}\otimes\E\right)\cdot\bnu = 0
	&\quad\mbox{on }\Gamma_D,\ t>0,
\end{align*}
and obtain $(\rho^*,\s^*)\in L^2(0,T;H^1(\Omega))\cap H^1(0,T;H_D^1(\Omega)')
\subset X$.
\end{enumerate}

The regularization in \eqref{2.m} ensures that the solution is unique, which
is necessary for the definition of the fixed-point operator. Observe that
$F(\rho,\s;\m;0)=(0,0;0)$ since the solutions to the homogeneous subproblems
($\sigma=0$) are trivial. Standard arguments show that the operator $F$ is continuous.
By Aubin's lemma \cite{Sim87}, bounded sequences in 
$L^2(0,T;H^1(\Omega))\cap H^1(0,T;H_D^1(\Omega)')$ are relatively compact in
$L^2(0,T;L^2(\Omega))$ and bounded sequences in 
$L^\infty(0,T;H^1(\Omega))\cap H^1(0,T;L^2(\Omega))$ are relatively compact in
$C^0([0,T];L^2(\Omega))$. Consequently, $F$ is compact. 
It remains to prove uniform estimates for all fixed points of $F(\cdot,\sigma)$.
They will be derived from estimates for the functional $S(t)$ defined
in \eqref{1.S}; see Section \ref{sec.est}. 
Then the Leray-Schauder fixed-point theorem
implies the existence of a fixed point of $F(\cdot,1)$, i.e.\ of a solution
to \eqref{2.rho}-\eqref{2.m} with the corresponding initial and boundary conditions.

The estimates from $S(t)$ turn out to be independent of $\eps$ which allows us
in the {\em second step} of the proof to perform the limit $\eps\to 0$.
The proof that we can remove the truncation in \eqref{2.rho}-\eqref{2.s}
is more delicate. We prove in the {\em third step} $L^\infty$ bounds for 
$\rho$ and $\s$ by employing a Moser-type iteration technique. The idea is
to derive $L^p$ estimates of $\rho$ and $\s$, which are independent of $p$,
and then to pass to the limit $p\to\infty$. By a refined Moser-Alikakos
iteration technique, it is even possible to show that the $L^\infty$ bounds
are uniform in time; see Remark \ref{rem.Linfty}.

The proof of $|\m|=1$ is slightly different.
First, we show for the regularized LLG equation \eqref{2.m}, by using a Moser-type
iteration, that $\|\m\|_{L^\infty(\omega)}\le 1$. After the limit $\eps\to 0$,
we can take the inner product of the limit equation \eqref{1.m} and $\m$
to deduce immediately that $|\m|=1$ in $\omega$, $t>0$.

%%%%%%%%%%%%%%%%%%%%%%%%%%%%%%%%%%%%%%%%%%%%%%%%%%%%%%%%%%%%%%%%%%%%%%%%%%%%%%

\section{Proof of Theorem \ref{thm.ex}}\label{sec.ex}

\subsection{Analysis of the regularized Maxwell equations}\label{sec.max}

We show that the regularized Maxwell equations \eqref{2.E}-\eqref{2.H} are
uniquely solvable.

\begin{lemma}[Existence of the regularized Maxwell equations]\label{lem.max}
Let $(\rho,\m)\in L^2(0,T;L^2(\Omega))$ $\times C^0([0,T];L^2(\Omega))^3$ and
$\sigma\in[0,1]$. Then there exists a unique mild solution 
$(\E,\H)\in C^0([0,T;$ $L^2(\Omega))^6$ to \eqref{fix.M1}-\eqref{fix.bc.M}.
\end{lemma}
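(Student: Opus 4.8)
The plan is to rewrite \eqref{fix.M1}--\eqref{fix.bc.M} as an abstract non-autonomous Cauchy problem on the Hilbert space $\mathcal H:=L^2(\Omega)^6$ and to apply semigroup theory. First I would introduce the Maxwell operator $A(\E,\H):=(\curl\H,-\curl\E)$ with domain $D(A)$ consisting of those $(\E,\H)\in\mathcal H$ with $\curl\E,\curl\H\in L^2(\Omega)$ that satisfy the tangential boundary conditions $\E\times\bnu=0$ on $\Gamma_D$ and $\H\times\bnu=0$ on $\Gamma_N$, understood in the sense of the boundary trace theory for $H(\curl;\Omega)$. The integration-by-parts identity for the curl together with \eqref{fix.bc.M} shows that $A$ is skew-adjoint, so that, by Stone's theorem, $A$ generates a unitary $C_0$-group $(e^{tA})_{t\in\R}$ on $\mathcal H$ (a $C_0$-semigroup would already suffice here, and can alternatively be obtained from the Lumer--Phillips theorem applied to $\pm A$). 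This is precisely the well-posedness of the homogeneous Maxwell system treated in \cite{Mil82,Joc96} and \cite[p.~435f.]{DaLi90}, which I would quote rather than reprove.

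Next I would split the right-hand side of \eqref{fix.M1}--\eqref{fix.M2} into an $\E$-dependent bounded perturbation and a genuine source term. For a.e.\ $t\in(0,T)$ define $B(t)\in\mathcal L(\mathcal H)$ by $B(t)(\E,\H):=\big(-\sigma D[\rho(t)]_M\E,\,0\big)$; since $0\le[\rho(t)]_M\le M$ pointwise, $\|B(t)\|_{\mathcal L(\mathcal H)}\le DM$ uniformly in $t$ and in $\sigma\in[0,1]$, and $t\mapsto B(t)$ is strongly measurable and essentially bounded. The source is $g(t):=\big(\sigma D\,\na R_\eps^x(\rho(t)),\,-\sigma\,\pa_t R_\eps^t(\m)(t)\big)$. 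By \eqref{2.Rx1}, $R_\eps^x(\rho(t))\in C^1(\overline\Omega)$ for a.e.\ $t$ with $\|\na R_\eps^x(\rho(t))\|_{L^\infty(\Omega)}\le k_\eps\|\rho(t)\|_{L^2(\Omega)}$, hence $t\mapsto\na R_\eps^x(\rho(t))\in L^2(0,T;L^2(\Omega))$; by \eqref{2.Rt1}, for a.e.\ $x$ the map $t\mapsto R_\eps^t(\m)(x,t)$ lies in $C^1([0,T])$ with $|\pa_t R_\eps^t(\m)(x,t)|\le k_\eps\|\m(x,\cdot)\|_{L^2(0,T)}$, so that (using $\m\in C^0([0,T];L^2(\Omega))\subset L^2(0,T;L^2(\Omega))$ and Fubini) $\pa_t R_\eps^t(\m)\in L^\infty(0,T;L^2(\Omega))$. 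In both cases $g\in L^2(0,T;\mathcal H)\subset L^1(0,T;\mathcal H)$.

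The equation then reads $\pa_t\u=A\u+B(t)\u+g(t)$, $\u(0)=\sigma(\E^0,\H^0)$, and a mild solution is, by definition, a fixed point in $C^0([0,T];\mathcal H)$ of
\[
  \Phi(\u)(t):=e^{tA}\u(0)+\int_0^t e^{(t-s)A}\big(B(s)\u(s)+g(s)\big)\,ds,\qquad t\in[0,T].
\]
Since $\|e^{tA}\|_{\mathcal L(\mathcal H)}=1$, $\|B(s)\|_{\mathcal L(\mathcal H)}\le DM$ and $g\in L^1(0,T;\mathcal H)$, the map $\Phi$ sends $C^0([0,T];\mathcal H)$ into itself, and I would equip this space with the equivalent norm $\|\u\|_\lambda:=\sup_{0\le t\le T}e^{-\lambda t}\|\u(t)\|_{\mathcal H}$ with $\lambda>DM$, which turns $\Phi$ into a contraction; Banach's fixed-point theorem then yields a unique $\u=(\E,\H)\in C^0([0,T];\mathcal H)=C^0([0,T];L^2(\Omega))^6$. (Alternatively one invokes the standard bounded-perturbation theory for $C_0$-semigroups, or solves on a short interval and iterates.) Uniqueness can also be seen directly: the difference $\w$ of two mild solutions satisfies $\w(t)=\int_0^t e^{(t-s)A}B(s)\w(s)\,ds$, so $\|\w(t)\|_{\mathcal H}\le DM\int_0^t\|\w(s)\|_{\mathcal H}\,ds$ and Gronwall's lemma forces $\w\equiv0$.

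The only genuinely delicate point will be the first step: that the Maxwell operator equipped with the mixed tangential conditions on $\Gamma_D$ and $\Gamma_N$ generates a $C_0$-group. This relies on the trace theory of $H(\curl;\Omega)$ and on the fact that the boundary contribution in $\langle A\u,\u\rangle_{\mathcal H}$ vanishes exactly because of \eqref{fix.bc.M}; I would import it from the references cited above. Everything after that is a routine Duhamel--Gronwall argument, and it is precisely the truncation $[\rho]_M$ in \eqref{fix.M1} that turns the $\E$-linear term into a bona fide bounded perturbation $B(t)$, so that no smallness or regularity of $\rho$ beyond $\rho\in L^2(0,T;L^2(\Omega))$ is required.
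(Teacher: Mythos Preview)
Your proposal is correct and follows essentially the same route as the paper: both set up the Maxwell operator with the mixed tangential boundary conditions, invoke Stone's theorem to obtain a unitary $C_0$-group on $L^2(\Omega)^6$, and then solve the Duhamel formulation by a Banach fixed-point argument, the Lipschitz constant $DM$ coming precisely from the truncation $[\rho]_M$. The only cosmetic difference is that the paper obtains the contraction on a short time interval and then extends globally via an energy estimate plus Gronwall, whereas you use the equivalent exponentially weighted norm $\|\cdot\|_\lambda$ to get a contraction on all of $[0,T]$ in one stroke; both devices are standard and interchangeable here.
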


Note that equations \eqref{1.M2} and \eqref{1.bc.M2} are yet not proved.
They will be shown in Section \ref{sec.eps} to hold for the de-regularized system.

\begin{proof}
The proof is based on semigroup theory and the Banach fixed-point theorem.
In principle, a fixed-point argument is not necessary since the Maxwell
equations are linear. However, we would need to deal with a non-autonomous operator
because of the presence of the term $[\rho(x,t)]_M\E$ on the right-hand side
of \eqref{fix.M1}. Therefore, we prefer the simple fixed-point argument.
Following \cite{Joc96}, we introduce the spaces
\begin{align*}
  W &= \big\{\u \in L^2(\Omega)^3: \curl\u \in L^2(\Omega)^3\big\}, \\
	W_E &= \bigg\{\u \in W: \int_\Omega(\boldsymbol\phi\cdot\curl\u 
	 - \u\cdot \curl\boldsymbol\phi)dx = 0\mbox{ for }\boldsymbol\phi
	\in C_0^\infty(\R^2\backslash\overline{\Gamma}_N)^3\bigg\}, \\
	W_H &= \bigg\{\mathbf{v}\in W: \int_\Omega(\u \cdot\curl\mathbf{v}
	- \mathbf{v}\cdot\curl\u )dx = 0 \mbox{ for }\u \in W_E\bigg\}.
\end{align*}
The space $W_E$ consists of all functions $\u$ satisfying $\u\times\bnu=0$ on 
$\Gamma_D$ in a generalized sense, 
and the space $W_H$ consists of all functions $\mathbf{v}$ such that 
$\mathbf{v}\times\bnu=0$ on $\Gamma_N$ in a weak sense. 
It is shown in Theorem~1, Chapter IX,
\S~3 of \cite{DaLi90} that the operator 
$$
  A:W_E\times W_H\to L^2(\Omega)^3\times L^2(\Omega)^3, \quad
	(\u ,\mathbf{v})\mapsto (-\curl\mathbf{v},\curl\u ),
$$
is skew self-adjoint, i.e.\ $A^*=-A$. Thus, $-\mathrm{i} A$ is self-adjoint and by the
Theorem of Stone, 
%\cite[Chapter~1, Theorem~10.8]{Paz83}, 
$-A$ generates a unitary $C_0$ group $(e^{-tA})_{t\in\R}$ in 
$L^2(\Omega)^3\times L^2(\Omega)^3$. 

The regularized Maxwell equations \eqref{fix.M1}-\eqref{fix.M2} can be reformulated
as
$$
  \pa_t(\E,\H) + A(\E,\H) = \sigma\big(D(\na R_\eps^x(\rho)-[\rho]_M\E),
	-\pa_t R_\eps^t(\m)\big), \quad t>0.
$$
The right-hand side is a function in $L^1(0,T;L^2(\Omega))^6$. Thus, 
by Duhamel's formula, 
%\cite[Section~4.2]{Paz83},
$$
  (\E,\H)(t) = e^{-tA}(\E^0,\H^0) + \sigma\int_0^t e^{-(t-s)A}
	\big(D(\na R_\eps^x(\rho)-[\rho]_M\E),-\pa_t R_\eps^t(\m)\big)(s)ds.
$$
We infer that the solutions to \eqref{fix.M1}-\eqref{fix.M2} are the fixed points
of the operator $G:C^0([0,T];$ $L^2(\Omega))^6\to C^0([0,T];L^2(\Omega))^6$,
defined by
$$
  G(\E,\H) = e^{-tA}(\E^0,\H^0) + \sigma\int_0^t e^{-(t-s)A}
	\big(D(\na R_\eps^x(\rho)-[\rho]_M\E),-\pa_t R_\eps^t(\m)\big)(s)ds.
$$
Since $(e^{-tA})_{t\in\R}$ is a unitary group and $0\le[\rho]_M\le M$
in $\Omega\times(0,T)$, we obtain for $(\E,\H)$, 
$(\E',\H')\in C^0([0,T];L^2(\Omega))^6$,
\begin{align*}
  \big\| & G(\E,\H)-G(\E',\H')\big\|_{L^2(\Omega)^6}
	\le \bigg\|\int_0^t e^{-(t-s)A}(D[\rho]_M(\E-\E'),0)(s)ds\bigg\|_{L^2(\Omega)^6} \\
	&\le DM\int_0^t\|\E-\E'\|_{L^2(\Omega)^3}ds 
	\le DM T\|\E-\E'\|_{C^0([0,T];L^2(\Omega))^3}.
\end{align*}
Thus, choosing $T>0$ sufficiently small, $G$ becomes a contraction,
and there exists a unique local-in-time mild solution $(\E,\H)$ to 
\eqref{fix.M1}-\eqref{fix.M2}. The global solvability is a consequence of
the energy estimate (see e.g.\ \cite[Prop.~2.4]{JMR00}):
\begin{align*}
  \frac12\frac{d}{dt}\int_\Omega(|\E|^2+|\H|^2)dx
	&= \sigma\int_\Omega\big(D(\na R_\eps^x(\rho)-[\rho]_M\E)\cdot\E
	- \pa_t R_\eps^t(\m)\cdot\H\big)dx \\
	&\le \sigma\int_\Omega\big(D\na R_\eps^x(\rho)\cdot\E
	- \pa_t R_\eps^t(\m)\cdot\H\big)dx \\
	&\le \frac12\int_\Omega(|\E|^2+|\H|^2)dx + c\int_\Omega
	\big(|\na R_\eps^x(\rho)|^2 + |\pa_t R_\eps^t(\m)|^2\big)dx,
\end{align*}
where here and in the following, $c>0$ denotes a generic constant independent
of $\eps$ and $M$ if not stated otherwise. By Gronwall's lemma and the properties
\eqref{2.Rx1} and \eqref{2.Rt1} of the regularization
operators,
$$
  \int_\Omega(|\E(t)|^2+|\H(t)|^2)dx \le c + c(\eps)\int_0^t\int_\Omega
	(\rho^2+|\m|^2)dxds, \quad t\ge 0.
$$
This estimate allows us to continue the local solution
for all time $t>0$.
\end{proof}

%%%%%%%%%%%%%%%%%%%%%%%%%%%%%%%%%%%%%%%%%%%%%%%%%%%%%%%%%%%%%%%%%%%%%%%%%%%%%%

\subsection{Analysis of the regularized LLG equation}\label{sec.llg}

We show that \eqref{fix.m} possesses a unique strong solution.

\begin{lemma}[Existence of the regularized LLG equation]\label{lem.llg}
Let $\H_\sigma:=\sigma(\H+\beta\s)\in L^2(\omega\times(0,T))$. 
Then there exists a unique strong solution
$\m$ to \eqref{fix.m}-\eqref{fix.bc.m} satisfying $|\m|\le 1$ in
$\omega\times(0,T)$,
$$
  \m\in L^\infty(0,T;H^1(\omega))\cap L^2(0,T;H^2(\omega)), \quad
	\pa_t\m\in L^2(0,T;L^2(\omega)),
$$
and the estimate
$$
  \|\m\|_{L^\infty(0,T;H^1(\omega))} + \eps^{1/2}\|\m\|_{L^2(0,T;H^2(\omega))}
	+ \|\pa_t\m\|_{L^2(0,T;L^2(\omega))} \le c,
$$
where $c>0$ is independent of $\eps$.
\end{lemma}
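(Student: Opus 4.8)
The plan is to prove existence via a Faedo--Galerkin approximation, obtain the uniform estimates from an energy identity together with a Moser-type iteration for the $L^\infty$-bound on $|\m|$, and finally establish uniqueness by a Gronwall argument that uses the two-dimensional Sobolev embedding. Throughout, the key structural feature is that the Gilbert form of the LLG equation is a semilinear parabolic system: expanding the double cross product, the leading term is $-\eps\Delta\m-\alpha\m\times(\m\times\Delta\m)=-\eps\Delta\m+\alpha(\Delta\m-(\m\cdot\Delta\m)\m)$ plus lower-order terms, which is uniformly parabolic as long as $|\m|\le 1$. I would begin by rewriting \eqref{fix.m} in the form $\pa_t\m-\eps\Delta\m=\m\times\Delta\m-\alpha\,\m\times(\m\times\Delta\m)+\m\times\H_\sigma-\alpha\,\m\times(\m\times\H_\sigma)$ and set up the Galerkin scheme in the eigenbasis of the Neumann Laplacian on $\omega$.

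The first main step is the a priori estimate. Testing the equation with $-\Delta\m$ (legitimate at the Galerkin level) and integrating by parts, the term $\int_\omega(\m\times\Delta\m)\cdot(-\Delta\m)\,dx$ vanishes by antisymmetry, the term $-\alpha\int_\omega\m\times(\m\times\Delta\m)\cdot\Delta\m\,dx=\alpha\int_\omega|\m\times\Delta\m|^2dx\ge0$ has the good sign, $\eps\|\Delta\m\|_{L^2}^2$ is the parabolic gain, and the forcing terms involving $\H_\sigma$ are controlled since $|\m|\le1$ by $\|\H_\sigma\|_{L^2}\|\Delta\m\|_{L^2}$, absorbed by Young's inequality. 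This yields $\frac{d}{dt}\|\na\m\|_{L^2}^2+\eps\|\Delta\m\|_{L^2}^2\le c\|\H_\sigma\|_{L^2}^2$, hence the bounds on $\m$ in $L^\infty(0,T;H^1(\omega))$ and $\eps^{1/2}\m$ in $L^2(0,T;H^2(\omega))$; then reading off $\pa_t\m$ from the equation and using $|\m|\le1$ together with the $H^2$-bound (and the two-dimensional embedding $H^1(\omega)\hookrightarrow L^4(\omega)$ to handle $\m\times\H_\sigma$) gives $\pa_t\m\in L^2(0,T;L^2(\omega))$ with an $\eps$-independent bound, since the dangerous $\m\times\Delta\m$ and $\m\times(\m\times\Delta\m)$ terms are bounded pointwise in $L^2$ by $\|\Delta\m\|_{L^2}$ only with an $\eps^{-1/2}$ — here one must be slightly careful and instead bound $\pa_t\m$ by pairing the equation with $\pa_t\m$ itself: $\|\pa_t\m\|_{L^2}^2\le(\pa_t\m,\eps\Delta\m)+\ldots$, which after integration in time and using $\frac{d}{dt}\|\na\m\|_{L^2}^2$ and the already-established $H^2$-bound closes without any negative power of $\eps$.

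The second step is the pointwise bound $|\m|\le1$. I would test the equation with $\m(|\m|^2-1)_+^{p-1}$, or more transparently derive the equation satisfied by $w:=|\m|^2$: computing $\pa_t w-\eps\Delta w=-2\eps|\na\m|^2+2\m\cdot(\m\times(\ldots))=-2\eps|\na\m|^2-2\alpha\,\m\cdot(\m\times(\m\times(\Delta\m+\H_\sigma)))$, and using $\m\cdot(\m\times X)=0$ for the first cross product while the double cross product contributes $+2\alpha(|\m|^2(\m\cdot(\Delta\m+\H_\sigma))-(\m\cdot(\Delta\m+\H_\sigma))|\m|^2)=0$ as well — so in fact $\pa_t w-\eps\Delta w=-2\eps|\na\m|^2\le0$, i.e. $w$ is a subsolution of the heat equation with Neumann data; since $w(0)=\sigma^2|\m^0|^2=\sigma^2\le1$, the maximum principle (or a Moser/Stampacchia truncation with $(w-1)_+$) gives $w\le1$ a.e., which is exactly $|\m|\le1$. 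This is cleaner than a full Moser iteration and is the argument I expect the authors hinted at.

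The third step, uniqueness, is the main obstacle and the only place the dimension $N=2$ is essential. Given two solutions $\m_1,\m_2$ with the same data, set $\d=\m_1-\m_2$ and subtract the equations; testing with $\d$ and integrating by parts produces $\frac12\frac{d}{dt}\|\d\|_{L^2}^2+\eps\|\na\d\|_{L^2}^2$ on the left and, on the right, differences of the nonlinear terms. The critical term is $\m_1\times\Delta\m_1-\m_2\times\Delta\m_2=\d\times\Delta\m_1+\m_2\times\Delta\d$, and $\int_\omega(\m_2\times\Delta\d)\cdot\d\,dx=-\int_\omega(\na\m_2\times\na\d+\m_2\times\na\d\cdot\na)\cdot\ldots$ after integration by parts reduces to $-\int_\omega(\na\m_2\times\na\d)\cdot\d\,dx$ (the $\m_2\times\Delta\d$ paired with $\d$ integrates by parts to $\na\m_2$ terms since $\d\cdot(\m_2\times\na\d)$ gives a gradient structure), bounded by $\|\na\m_2\|_{L^2}\|\na\d\|_{L^4}\|\d\|_{L^4}$, and the Gagliardo--Nirenberg inequality $\|\d\|_{L^4}^2\le c\|\d\|_{L^2}\|\na\d\|_{L^2}$ valid in 2D (likewise $\|\na\d\|_{L^4}^2\le c\|\na\d\|_{L^2}\|\d\|_{H^2}$) lets this be absorbed into $\eps\|\na\d\|_{L^2}^2$ at the cost of a factor involving $\|\na\m_2\|_{L^2}^2$ and the $H^2$-norm of the solutions, both in $L^2(0,T)$ or $L^\infty(0,T)$ by the a priori estimates; similarly $\d\times\Delta\m_1$ paired with $\d$ is $\int(\d\times\Delta\m_1)\cdot\d=0$ automatically by antisymmetry of the triple product — so that term is actually harmless. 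The $\alpha$-terms and the $\H_\sigma$-terms are lower order and handled the same way. One then arrives at $\frac{d}{dt}\|\d\|_{L^2}^2\le g(t)\|\d\|_{L^2}^2$ with $g\in L^1(0,T)$, and Gronwall gives $\d\equiv0$. In dimension $\ge3$ the Gagliardo--Nirenberg exponents no longer close, which is precisely the restriction noted after \eqref{hypo.ic}.
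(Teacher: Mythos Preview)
Your overall strategy matches the paper's: Galerkin approximation in the Neumann eigenbasis, energy estimates by testing with $-\Delta\m$ and $\pa_t\m$, an $L^\infty$ bound on $|\m|$, and uniqueness via a 2D Gagliardo--Nirenberg/Gronwall argument. Two points deserve attention.

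\textbf{Ordering of the $|\m|\le 1$ bound.} You invoke $|\m|\le 1$ already in the first energy estimate at the Galerkin level, but neither your heat-subsolution argument for $w=|\m|^2$ nor the paper's Moser iteration is available there: the required test functions ($(w-1)_+\m$ or $|\m|^{p-1}\m$) do not lie in the finite-dimensional Galerkin space. The paper handles this by a two-pass argument: in Step~1 it bounds $\|\m^{(N)}\|_{L^\infty}$ via the 2D Gagliardo--Nirenberg inequality $\|\m^{(N)}\|_{L^\infty}\le c\|\m^{(N)}\|_{H^2}^{1/2}\|\m^{(N)}\|_{L^2}^{1/2}$ and absorbs the resulting $\|\Delta\m^{(N)}\|_{L^2}$ into the $\eps$-term, obtaining $\eps$-\emph{dependent} bounds sufficient to pass $N\to\infty$; only in Step~2, at the continuous level, is $|\m|\le 1$ established and the estimates redone $\eps$-independently. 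Your sketch needs this intermediate step. That said, your derivation of $\pa_t w-\eps\Delta w=-2\eps|\na\m|^2\le 0$ is a genuinely cleaner route to $|\m|\le 1$ than the paper's $L^{p+1}$ iteration, once one is at the continuous level.

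\textbf{The $\eps$-independent bound on $\pa_t\m$.} When you pair with $\pa_t\m$, the right-hand side produces $\|\m\times\Delta\m\|_{L^2}^2$ and $\|\m\times(\m\times\Delta\m)\|_{L^2}^2$. Bounding these by $\|\Delta\m\|_{L^2}^2$ (the ``$H^2$-bound'') costs a factor $\eps^{-1}$ after time integration. What actually closes the estimate is the $\alpha\int_0^t\|\m\times\Delta\m\|_{L^2}^2$ term that appears with a good sign when you test with $-\Delta\m$; the paper uses precisely this quantity (its estimate \eqref{llg.aux1}) rather than the full $H^2$ norm. Your phrasing ``using the already-established $H^2$-bound'' obscures this and should be replaced by the $L^2_tL^2_x$ bound on $\m\times\Delta\m$.

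Finally, in the uniqueness step your H\"older pairing $\|\na\m_2\|_{L^2}\|\na\d\|_{L^4}\|\d\|_{L^4}$ forces you to control $\|\na\d\|_{L^4}$ via $\|\d\|_{H^2}$, which is not available on the left-hand side. The paper instead uses $\|\d\|_{L^4}\|\na\m\|_{L^4}\|\na\d\|_{L^2}$, so that Gagliardo--Nirenberg acts on $\|\d\|_{L^4}$ and $\|\na\m\|_{L^4}$ (the latter bounded by $\|\m\|_{H^2}^{1/2}$, which is in $L^2(0,T)$), yielding after Young $\delta\|\d\|_{H^1}^2+c(\delta)\|\m\|_{H^2}^2\|\d\|_{L^2}^2$ and a clean Gronwall.
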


\begin{proof}
The proof is based on the Galerkin method, standard $L^2$ estimates,
and a Moser-type iteration to prove the $L^\infty$ bound for $\m$. 

{\em Step 1: Existence of solutions to \eqref{fix.m}-\eqref{fix.bc.m}.}
Let $e_i\in H^2(\omega)\cap L^\infty(\omega)$ ($i\in\N$)
be the eigenfunctions of $-\Delta$ in $\omega$ with homogeneous Neumann boundary 
conditions and with associated eigenvalues $\lambda_i>0$.  
Let $\m^{(N)}(x,t)=\sum_{j=1}^N \m^{(N,j)}(t)e_j(x)$ be the approximated solution
to \eqref{fix.m}-\eqref{fix.bc.m}, that is
\begin{align}\label{llg.1}
  \int_\omega\big( & \pa_t \m^{(N)} - \eps\Delta\m^{(N)} 
	- \m^{(N)}\times(\Delta\m^{(N)} + \H_\sigma) \\
	&{}+ \alpha\m^{(N)}\times(\m^{(N)}
	\times(\Delta\m^{(N)} + \H_\sigma))\big)e_i dx = 0, \quad i=1,\ldots,N. \nonumber
\end{align}
This is a system of ordinary differential equations
in the unknowns $(\m^{(N,j)})_{j=1,\ldots,N}$, which has a unique $H^2$ solution
$\m^{(N)}$ in a suitable time interval $(0,T^*)$ with $T^*\le T$. It remains
to find $N$-independent estimates for $\m^{(N)}$ in order to conclude
global solvability.

We take the inner product of \eqref{llg.1} and $\m^{(N,i)}$ and sum over $i$,
yielding
$$
  \frac12\frac{d}{dt}\int_\omega |\m^{(N)}|^2 dx 
	+ \eps\int_\omega|\na\m^{(N)}|^2 dx = 0
$$
and hence $\|\m^{(N)}(t)\|_{L^2(\omega)}\le \|\m^{(N)}(0)\|_{L^2(\omega)}\le c$
for $t<T^*$, where $c>0$ does not depend on $N$. Next, we take the inner product
of \eqref{llg.1} and $\lambda_i\m^{(N,i)}$, sum over $i$, and employ the
elementary itentity $(\mathbf{a}\times\mathbf{b})\cdot\mathbf{c}
=(\mathbf{c}\times\mathbf{a})\cdot\mathbf{b}$ for $\mathbf{a}$, $\mathbf{b}$,
$\mathbf{c}\in\R^3$:
\begin{align}
  \frac12\frac{d}{dt} & \int_\omega|\na\m^{(N)}|^2 dx
	+ \eps\int_\omega|\Delta\m^{(N)}|^2 dx 
	+ \alpha\int_\omega|\m^{(N)}\times\Delta\m^{(N)}|^2 dx \label{llg.2} \\
	&= \int_\omega \big(-\m^{(N)}\times\H_\sigma + \alpha\m^{(N)}\times(\m^{(N)}
	\times\H_\sigma)\big)\cdot\Delta\m^{(N)} dx \nonumber \\
	&= \int_\omega(-\H_\sigma + \alpha\m^{(N)}\times\H_\sigma)
	\cdot(\Delta\m^{(N)}\times\m^{(N)})dx \nonumber \\
	&\le \frac{\alpha}{2}\int_\omega|\m^{(N)}\times\Delta\m^{(N)}|^2 dx
	+ c\int_\omega|\H_\sigma|^2(1 + |\m^{(N)}|^2)dx \nonumber \\
	&\le \frac{\alpha}{2}\int_\omega|\m^{(N)}\times\Delta\m^{(N)}|^2 dx
	+ c(1 + \|\m^{(N)}\|_{L^\infty(\omega)}^2)
	\|\H_\sigma\|_{L^\infty(0,T;L^2(\omega))}^2,
	\nonumber
\end{align}
where $c>0$ is a generic constant independent of $N$.
By the Gagliardo-Nirenberg inequality, the $L^\infty$ norm of $\m^{(N)}$ 
can be bounded by
$$
  \|\m^{(N)}\|_{L^\infty(\omega)} \le c\|\m^{(N)}\|_{H^2(\omega)}^{1/2}
	\|\m^{(N)}\|_{L^2(\omega)}^{1/2}
	\le \delta\|\m^{(N)}\|_{H^2(\omega)} + c(\delta)\|\m^{(N)}\|_{L^2(\omega)},
$$
where $\delta>0$ is arbitrary.
The $H^2$ norm of $\m^{(N)}$ can be estimated by the $L^2$ norms of
$\Delta\m^{(N)}$ and $\m^{(N)}$ \cite[Lemma~2.1]{CaFa01}:
$$
  \|\m^{(N)}\|_{H^2(\omega)} \le c\big(\|\Delta\m^{(N)}\|_{L^2(\omega)}
	+ \|\m^{(N)}\|_{L^2(\omega)}\big)
	\le c\big(1+\|\Delta\m^{(N)}\|_{L^2(\omega)}\big),
$$
which holds for all $H^2$ functions with homogeneous Neumann boundary conditions.
Choosing $\delta>0$ sufficiently small, we infer from \eqref{llg.2} after
an integration over $(0,t)$ with $t<T^*$ that
$$
  \frac12\int_\omega|\na\m^{(N)}(t)|^2 dx 
	+ \frac{\eps}{2}\int_0^t\int_\omega|\Delta\m^{(N)}|^2 dxds
	+ \frac{\alpha}{2}\int_0^t\int_\omega|\m^{(N)}\times\Delta\m^{(N)}|^2 dxds
	\le c(\eps),
$$
where $c(\eps)>0$ depends on $\eps$ and $\|\na\m^{(N)}(0)\|_{L^2(\Omega)}$ 
but is independent of $N$. This shows that the solution $\m^{(N)}$ to \eqref{llg.1}
exists on $[0,T]$. Moreover, the above bound also allows us to perform the
limit $N\to\infty$ in \eqref{llg.1}, which gives a strong solution 
$\m\in L^\infty(0,T;H^1(\omega))\cap L^2(0,T;H^2(\omega))$, 
$\pa_t\m\in L^2(0,T;L^2(\omega))$ to \eqref{fix.m}-\eqref{fix.bc.m}.

{\em Step 2: $\eps$-uniform estimates for $\m$.}
Next, we prove some estimates for $\m$ which are independent of $\eps$.
First, we show the $L^\infty$ bound.
Let $p>1$. We take the inner product of \eqref{fix.m} and 
$|\m|^{p-1}\m\in L^2(0,T;L^\infty(\omega))$:
$$
  \frac{1}{p+1}\frac{d}{dt}\int_\omega|\m|^{p+1}dx
	+ \eps\int_\omega\sum_{k=1}^3\pa_k(|\m|^{p-1}\m)\cdot\pa_k\m dx = 0,
$$
where we abbreviated $\pa_k=\pa/\pa x_k$. Since
$$
  \sum_{k=1}^3\pa_k(|\m|^{p-1}\m)\cdot\pa_k\m
	= \sum_{i,j=1}^3\sum_{k=1}^3|\m|^{p-1}\bigg(\delta_{ij} 
	+ (p-1)\frac{m_im_j}{|\m|^2}\bigg)\pa_k m_i\pa_k m_j \ge 0,
$$
we obtain $\|\m(t)\|_{L^{p+1}(\omega)}\le \|\m^0\|_{L^{p+1}(\omega)}$ for all
$t\in[0,T]$ and $p>1$. Exploiting the fact that $|\m^0|=1$ in $\omega$, we
may let $p\to\infty$ to deduce that $\|\m(t)\|_{L^\infty(\omega)}\le 1$ for
$t\in[0,T]$.

In order to derive some uniform gradient estimates, we take the inner product
of \eqref{fix.m} and $-\Delta\m$ and integrate in $\omega$. Similarly as in
\eqref{llg.2}, this gives
\begin{align}
  \frac12\frac{d}{dt} & \int_\omega|\na\m|^2 dx + \eps\int_\omega|\Delta\m|^2 dx
	+ \frac{\alpha}{2}\int_\omega|\m\times\Delta\m|^2 dx \label{llg.aux1} \\
	&\le c\int_\omega|\H_\sigma|^2(1+|\m|^2) dx \le 2c\int_\omega|\H_\sigma|^2 dx, \nonumber
\end{align}
where we have used the fact that $|\m|\le 1$.
Taking the inner product of \eqref{fix.m} and $\pa_t\m$ and integrating in 
$\omega$ leads to
\begin{align*}
  \int_\omega|\pa_t\m|^2 dx + \frac{\eps}{2}\frac{d}{dt}\int_\omega|\na\m|^2 dx
	&= \int_\omega\pa_t\m\cdot(\m\times(\Delta\m+\H_\sigma))dx \\
	&\phantom{xx}{}
	- \alpha\int_\omega\pa_t\m\cdot\big(\m\times(\m\times(\Delta\m+\H_\sigma))\big)dx.
\end{align*}
We integrate over $(0,t)$, apply Young's inequality, use the boundedness of $\m$,
and take into account estimate \eqref{llg.aux1}:
\begin{align*}
  \int_0^t & \int_\omega|\pa_t\m|^2 dxds 
	+ \frac{\eps}{2}\int_\omega|\na\m(t)|^2 dx 
	- \frac{\eps}{2}\int_\omega|\na\m(0)|^2 dx  \\
	&\le \frac12\int_0^t \int_\omega|\pa_t\m|^2 dxds 
	+ c\int_0^t\int_\omega|\m\times\Delta\m|^2 dxds 
	+ c\int_0^t\int_\omega|\H_\sigma|^2 dxds \\
	&\le \frac12\int_0^t \int_\omega|\pa_t\m|^2 dxds 
	+ c\int_0^t\int_\omega|\H_\sigma|^2 dxds.
\end{align*}
Then, combining this estimate and the time-integrated version of
\eqref{llg.aux1}, we obtain
\begin{align}\label{llg.estm}
  \int_\omega & |\na\m(t)|^2 dx
	- \int_\omega|\na\m(0)|^2 dx
	+ \eps\int_0^t\int_\omega|\Delta\m|^2 dxds \\
	&{}+ \int_0^t\int_\omega|\pa_t\m|^2 dxds \le 
	c\int_0^t\int_\omega|\H_\sigma|^2 dxds. \nonumber
\end{align} 
This gives $\eps$-uniform estimates for $\m$ in the spaces
$L^\infty(0,T;H^1(\omega))$ and $H^1(0,T;L^2(\omega))$ and 
for $\eps^{1/2}\m$ in $L^2(0,T;H^2(\omega))$.

{\em Step 3: Uniqueness of solutions.} Let $\m$, $\m'$ be two strong 
solutions to \eqref{fix.m}-\eqref{fix.bc.m} satisfying $|\m|\le 1$,
$|\m'|\le 1$ in $\omega\times(0,T)$. Set $\u:=\m-\m'$ and recall that
$\H_\sigma=\sigma(\H+\beta\s)\in L^2(0,T;L^2(\omega))$. Then $\u$ solves
\begin{align*}
   \pa_t\u - \eps\Delta\u &= \m\times(\Delta\m+\H_\sigma)
	- \m'\times(\Delta\m'+\H_\sigma) \\
	&\phantom{xx}{}-\alpha\Big(\m\times(\m\times(\Delta\m+\H_\sigma))
	- \m'\times(\m'\times(\Delta\m'+\H_\sigma))\Big) \\
	&= \u\times(\Delta\m+\H_\sigma) + \m'\times\Delta\u 
	-\alpha\u\times(\m\times(\Delta\m+\H_\sigma))\\
	&\phantom{xx}- \alpha\m'\times(\u\times(\Delta\m+\H_\sigma)) 
	- \alpha\m'\times(\m'\times\Delta\u).
\end{align*}
Taking the inner product of this equation and $\u$, the first and third terms
on the right-hand side cancel. Then, integrating in $\omega$ leads to
\begin{align}
  \frac12\frac{d}{dt} & \int_\omega|\u|^2 dx + \eps\int_\omega|\na\u|^2 dx
	= \int_\omega\u\cdot(\m'\times\Delta\u)dx \label{llg.aux3} \\
	&{}- \alpha\int_\omega\u\cdot\big(\m'\times(\u\times(\Delta\m+\H_\sigma))\big)dx
	- \alpha\int_\omega\u\cdot(\m'\times(\m'\times\Delta\u))dx. \nonumber
\end{align}
Integrating by parts, 
the first two integrals on the right-hand side are estimated as follows:
\begin{align*}
  \int_\omega\u\cdot( & \m'\times\Delta\u)dx
	- \alpha\int_\omega\u\cdot\big(\m'\times(\u\times(\Delta\m+\H_\sigma))\big)dx \\
	&= -\int_\omega\u\cdot(\na\m'\times\na \u)dx 
	- \int_\omega\na\u\cdot(\m'\times\na\u)dx \\
	&\phantom{xx}{}+ \alpha\int_\omega\u\cdot(\m'\times(\na\u\times\na\m))dx
	+ \alpha\int_\omega\u\cdot(\na\m'\times(\u\times\na\m))dx \\
	&\phantom{xx}{}+ \alpha\int_\omega\na\u\cdot(\m'\times(\u\times\na\m))dx
	- \alpha\int_\omega\u\cdot(\m'\times(\u\times\H_\sigma))dx \\
	&\le c\int_\omega|\u|\big(|\na\m| + |\na\m'|\big)|\na\u| dx
	+ c\int_\omega|\u|^2|\na\m|\,|\na\m'|dx + c\int_\omega|\u|^2|\H_\sigma|dx.
\end{align*}
The last integral on the right-hand side of \eqref{llg.aux3} can be treated
in a similar way:
\begin{align*}
  - \alpha\int_\omega & \u\cdot(\m'\times(\m'\times\Delta\u))dx
	= \alpha\int_\omega\u\cdot(\m'\times(\na\m'\times\na\u))dx \\
	&{}+ \alpha\int_\omega\u\cdot(\na\m'\times(\m'\times\na\u))dx
	+ \alpha \int_\omega\na\u\cdot(\m'\times(\m'\times\na\u))dx \\
	&\le c\int_\omega|\u|\,|\na\m'|\,|\na\u| dx - \alpha\int_\omega|\m\times\na\u|^2 dx.
\end{align*}
Inserting these estimates into \eqref{llg.aux3}, we deduce that
\begin{align}
  \frac12\frac{d}{dt}\int_\omega|\u|^2 dx + \eps\int_\omega|\na\u|^2 dx
	&\le c\int_\omega|\u|\big(|\na\m| + |\na\m'|\big)|\na\u| dx \label{llg.aux4} \\
	&\phantom{xx}{}
	+ c\int_\omega|\u|^2|\na\m|\,|\na\m'|dx + c\int_\omega|\u|^2|\H_\sigma|dx.
	\nonumber
\end{align}

The first integral on the right-hand side becomes
$$
  \int_\omega|\u|\big(|\na\m| + |\na\m'|\big)|\na\u| dx
	\le \|\u\|_{L^4(\omega)}\big(\|\na\m\|_{L^4(\omega)} + \|\na\m'\|_{L^4(\omega)}\big)
	\|\na\u\|_{L^2(\omega)}.
$$
The Gagliardo-Nirenberg inequality and the bound $|\m|\le 1$ imply that
\begin{align*}
  \|\u\|_{L^4(\omega)} &\le c\|\u\|_{H^1(\omega)}^{1/2}\|\u\|_{L^2(\omega)}^{1/2}, \\
	\|\na\m\|_{L^4(\omega)} &\le c\|\m\|_{H^2(\omega)}^{1/2}
	\|\m\|_{L^\infty(\omega)}^{1/2} \le c\|\m\|_{H^2(\omega)}^{1/2}, \\
	\|\na\m'\|_{L^4(\omega)} &\le c\|\m'\|_{H^2(\omega)}^{1/2}
	\|\m'\|_{L^\infty(\omega)}^{1/2} \le c\|\m'\|_{H^2(\omega)}^{1/2}.
\end{align*}
This shows that
\begin{align*}
  \int_\omega|\u|\big(|\na\m| + |\na\m'|\big)|\na\u| dx
	&\le c\|\u\|_{L^2(\omega)}^{1/2}\big(\|\m\|_{H^2(\omega)}^{1/2}
	+ \|\m'\|_{H^2(\omega)}^{1/2}\big)\|\u\|_{H^1(\omega)}^{3/2} \\
	&\le \delta\|\u\|_{H^1(\omega)}^2 + c(\delta)\|\u\|_{L^2(\omega)}^2
	\big(\|\m\|_{H^2(\omega)}^{2}	+ \|\m'\|_{H^2(\omega)}^{2}\big),
\end{align*}
where $\delta>0$ is arbitrary. This argument is only possible in two space
dimensions. Indeed, in three dimensions, we obtain the expression
$c(\delta)\|\u\|_{L^2(\omega)}^2\big(\|\m\|_{H^2(\omega)}^{4}	
+ \|\m'\|_{H^2(\omega)}^{4}\big)$, which we cannot estimate since
we do not have the regularity $\m$, $\m'\in L^4(0,T;H^2(\omega))$.
 
The second integral on the right-hand side of \eqref{llg.aux4} can be
estimated in a similar way, using the continuous embedding 
$H^1(\omega)\hookrightarrow L^{4}(\omega)$:
\begin{align*}
  \int_\omega|\u|^2|\na\m|\,|\na\m'|dx
	&\le \|\u\|_{L^4(\omega)}^2\|\na\m\|_{L^4(\omega)}\|\na\m'\|_{L^4(\omega)} \\
	&\le \delta\|\u\|_{H^1(\omega)}^2 
	+ c(\delta)\|\u\|_{L^2(\omega)}^2\|\m\|_{H^2(\omega)}\|\m'\|_{H^2(\omega)}.
\end{align*}
(Also this estimate holds in two space dimensions only.)
Finally, the last integral in \eqref{llg.aux4} becomes
\begin{align*}
  \int_\omega|\u|^2|\H_\sigma|dx 
	&\le \|\u\|_{L^4(\omega)}^2\|\H_\sigma\|_{L^2(\omega)}
	\le c\|\u\|_{H^1(\omega)}\|\u\|_{L^2(\omega)}\|\H_\sigma\|_{L^2(\omega)} \\
	&\le \delta\|\u\|_{H^1(\omega)}^2 + c(\delta)\|\H_\sigma\|_{L^2(\omega)}^2
	\|\u\|_{L^2(\omega)}^2.
\end{align*}
Putting these estimates together and choosing $\delta>0$
sufficiently small, we conclude from \eqref{llg.aux4} that
\begin{align*}
  & \frac12\frac{d}{dt}\int_\omega|\u|^2 dx + \frac{\eps}{2}\int_\omega|\na\u|^2 dx
	\le c(\eps)\big(1+g(t)\big)\int_\omega|\u|^2 dx,  \\
	& \mbox{where }g(t) = \|\H_\sigma(t)\|_{L^2(\omega)}^2 + \|\m(t)\|_{H^2(\omega)}^2
	+ \|\m'(t)\|_{H^2(\omega)}^2.
\end{align*}
As $g\in L^1(0,T)$, 
Gronwall's lemma and $\u(0)=0$ imply that $\u(t)=0$ in $\omega$, $t>0$, which
finishes the proof. 
\end{proof}

%%%%%%%%%%%%%%%%%%%%%%%%%%%%%%%%%%%%%%%%%%%%%%%%%%%%%%%%%%%%%%%%%%%%%%%%%%%%%%

\subsection{Uniform estimates and existence of the regularized problem}
\label{sec.est}

We need uniform estimates for all fixed points of the operator $F$, defined in 
Section \ref{sec.strat}. Such an estimate is provided by the following lemma.
Recall that $X=L^2(0,T;L^2(\Omega))^3$ and $Y=C^0([0,T];L^2(\omega))$.

\begin{lemma}[$L^2$ estimate]\label{lem.est}
Let $(\rho,\s;\m)\in X\times Y$ 
be a fixed point of $F(\cdot,\sigma)$ for some $\sigma\in[0,1]$.
Then there exist constants $c_1$, $c_2(M)>0$, which are independent of
$\eps$ and $\sigma$, such that for all $t\in(0,T)$, 
the functional $S(t)$, defined in \eqref{1.S}, satisfies
$$
  S(t) + c_1\int_0^t\int_\Omega\big(|\na\rho|^2 + |\na\s|^2\big)dx
	+ c_1\int_0^t\int_\omega|\pa_t\m|^2 dx \le c_2(M).
$$
\end{lemma}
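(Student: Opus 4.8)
The plan is to test each equation with its natural quadratic multiplier, add the resulting identities, and absorb the cross terms using the truncation bound, Young's inequality, and the estimates from Lemmas~\ref{lem.max} and~\ref{lem.llg}. First I would test the charge equation \eqref{fix.rho} with $\rho-\sigma\rho_D$ (which has zero trace on $\Gamma_D$, so it is an admissible test function, and the Neumann flux vanishes on $\Gamma_N$). Using $|\sigma[\rho]_M|\le M$ in the drift term, the term $D\int_\Omega[\rho]_M\E\cdot\na(\rho-\sigma\rho_D)\,dx$ is controlled by $\tfrac{D}{2}\|\na(\rho-\sigma\rho_D)\|_{L^2}^2 + c(M)\|\E\|_{L^2}^2$, leaving a good fraction of the Dirichlet form on the left; the time derivative contributes $\tfrac12\tfrac{d}{dt}\|\rho-\sigma\rho_D\|_{L^2}^2$ since $\rho_D$ is time-independent, and one small extra term $c\|\na\rho_D\|_{L^2}\|\na\rho\|_{L^2}$ to be absorbed. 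Similarly I would test the spin equation \eqref{fix.s} with $\s$ (zero trace on $\Gamma_D$): the rotation term $\gamma\int_\omega(\m^*\times[|\s|]_M\tfrac{\s}{|\s|})\cdot\s\,dx$ vanishes pointwise because $(\a\times\b)\cdot\b=0$, the relaxation term $-\tfrac1\tau\|\s\|_{L^2}^2$ has a helpful sign, and the truncated drift $D\int_\Omega[|\s|]_M\tfrac{\s}{|\s|}\otimes\E:\na\s\,dx$ is again bounded by $\tfrac{D}{2}\|\na\s\|_{L^2}^2 + c(M)\|\E\|_{L^2}^2$.

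Next I would reuse the Maxwell energy identity already derived inside the proof of Lemma~\ref{lem.max}, but now keep the drift term $-D\int_\Omega[\rho]_M\E\cdot\E\,dx=-D\int_\Omega[\rho]_M|\E|^2\,dx\le 0$ (it has the right sign, so it can simply be dropped), giving
\begin{equation*}
  \frac12\frac{d}{dt}\int_\Omega(|\E|^2+|\H|^2)\,dx
  \le \sigma\int_\Omega D\,\na R_\eps^x(\rho)\cdot\E\,dx
  - \sigma\int_\Omega \pa_t R_\eps^t(\m)\cdot\H\,dx.
\end{equation*}
Here is the only genuinely delicate point: the right-hand side contains $\na R_\eps^x(\rho)$ and $\pa_t R_\eps^t(\m)$, and a naive Young estimate produces $c(\eps)$ constants, which is fatal since we need $\eps$-independence. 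The resolution is the $\eps$-uniform bound \eqref{2.Rx2} (resp.\ \eqref{2.Rt2}): $\|R_\eps^x(\rho)\|_{H^1(\Omega)}\le k_0\|\rho\|_{H^1(\Omega)}$, so after integrating by parts (or directly) $\int_\Omega D\,\na R_\eps^x(\rho)\cdot\E\,dx$ is bounded by $c\|\na\rho\|_{L^2}\|\E\|_{L^2}\le \eta\|\na\rho\|_{L^2}^2 + c(\eta)\|\E\|_{L^2}^2$ with $k_0$-dependent but $\eps$-independent constants, and the $\na\rho$ piece is absorbed into the Dirichlet form produced above. Likewise $\int_\Omega \pa_t R_\eps^t(\m)\cdot\H\,dx\le c\|\pa_t\m\|_{L^2(\omega)}^2 + c\|\H\|_{L^2}^2$ using \eqref{2.Rt2}; the $\|\pa_t\m\|_{L^2(\omega)}^2$ term must be handled together with the LLG identity, which is why I keep a full coefficient of $\int_\omega|\pa_t\m|^2$ on the left and a free multiplicative constant in front of it to play with.

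For the magnetization I would use the identity established in the proof of Lemma~\ref{lem.llg} (inner product of \eqref{fix.m} with $\pa_t\m$, combined with the $-\Delta\m$ identity \eqref{llg.aux1}), namely a differential inequality of the form $\tfrac12\tfrac{d}{dt}\int_\omega|\na\m|^2\,dx + c_2\int_\omega|\pa_t\m|^2\,dx\le c\int_\omega|\H_\sigma|^2\,dx\le c\sigma^2\int_\omega(|\H|^2+\beta^2|\s|^2)\,dx$, where $|\m|\le 1$ was used to kill the $|\m\times\Delta\m|^2$ and $\eps|\Delta\m|^2$ contributions. The right-hand side here is exactly of the form already appearing in the bookkeeping: $\|\H\|_{L^2}^2$ feeds into the electromagnetic energy and $\|\s\|_{L^2}^2$ into the spin $L^2$-norm, both of which sit inside $S(t)$. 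Now I would add the four identities with suitable positive weights, choosing the Young parameters $\eta$ small enough that all gradient and $\pa_t\m$ terms produced on the right are dominated by a fixed fraction of those on the left; the surviving terms on the right are all of the form (constant)$\times(\|\rho-\sigma\rho_D\|_{L^2}^2+\|\s\|_{L^2}^2+\|\E\|_{L^2}^2+\|\H\|_{L^2}^2+\|\na\m\|_{L^2}^2) + c(M)$, i.e.\ (constant)$\times S(t)+c(M)$, where the additive $c(M)$ collects the fixed data norms $\|\rho_D\|_{H^1}^2$. This yields $\tfrac{d}{dt}S(t) + c_1\big(\|\na\rho\|_{L^2}^2+\|\na\s\|_{L^2}^2+\|\pa_t\m\|_{L^2(\omega)}^2\big)\le cS(t)+c(M)$, and since $S(0)=\tfrac12\sigma^2\big(\|\rho^0-\rho_D\|_{L^2}^2+\cdots\big)\le c$ uniformly in $\sigma\in[0,1]$, Gronwall's lemma on $[0,T]$ gives $S(t)\le c_2(M)e^{cT}$ and, after integrating the dissipation terms, the claimed inequality. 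All constants depend only on the data, $T$, $M$ (through the truncation) and $k_0$ — in particular not on $\eps$ or $\sigma$. The main obstacle is precisely the $\eps$-uniformity of the Maxwell right-hand side, which is why the careful use of \eqref{2.Rx2}--\eqref{2.Rt2} rather than \eqref{2.Rx1}--\eqref{2.Rt1} is essential; everything else is routine energy bookkeeping.
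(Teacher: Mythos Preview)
Your proposal is correct and follows essentially the same route as the paper: test \eqref{fix.rho} with $\rho-\sigma\rho_D$, \eqref{fix.s} with $\s$, use the Maxwell energy identity together with the $\eps$-uniform bounds \eqref{2.Rx2}--\eqref{2.Rt2} (which you correctly flag as the crucial point), invoke the LLG estimate \eqref{llg.estm}, and close by Gronwall. One small caveat: the bound \eqref{2.Rt2} is stated for the full $H^1(0,T)$ norm, so the estimate on $\pa_t R_\eps^t(\m)$ is naturally a \emph{time-integrated} bound rather than a pointwise-in-$t$ one; accordingly the paper carries out the whole argument in integrated form $\int_0^t(\cdots)\,ds$ and applies Gronwall to the resulting integral inequality, which is a minor reshuffling of your differential-inequality presentation.
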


\begin{proof}
To simplify the computations, we let $\sigma=1$. The proof for general
$\sigma\in[0,1]$ is similar. We compute
\begin{equation}\label{en.1}
  \frac{dS}{dt} = \langle\pa_t\rho,\rho-\rho_D\rangle + \langle\pa_t\s,\s\rangle
	+ \frac12\frac{d}{dt}\int_\Omega\big(|\E|^2 + |\H|^2\big) dx
	+ \frac12\frac{d}{dt}\int_\omega|\na\m|^2 dx,
\end{equation}
where $\langle\cdot,\cdot\rangle$ is the dual product between
$H_D^1(\Omega)'$ and $H_D^1(\Omega)$.
Employing \eqref{fix.rho} with $\sigma=1$ and $\rho=\rho^*$, we find that
\begin{align}
  \int_0^t & \langle\pa_t\rho,\rho-\rho_D\rangle ds
	= -\int_0^t\int_\Omega D\na(\rho-\rho_D)\cdot(\na\rho-[\rho]_M\E) dxds 
	\label{en.rho}\\
	&\le -\frac12\int_0^t\int_\Omega D|\na\rho|^2 dxds
	+ \frac12\int_0^t\int_\Omega D|\na\rho_D|^2 dxds \nonumber \\
	&\phantom{xx}{}+ cM\int_0^t\int_\Omega|\na(\rho-\rho_D)|\,|\E|dxds \nonumber \\
	&\le -c\int_0^t\int_\Omega|\na\rho|^2 dxds + c\int_0^t\int_\Omega|\E|^2 dxds + c(M).
	\nonumber
\end{align}
%since $\inf_\Omega D>0$.
Furthermore, using \eqref{fix.s} with $\sigma=1$, $\s^*=\s$, and $\m=\m^*$,
%and $\inf_\Omega D>0$ again,
\begin{align}
  \int_0^t \langle\pa_t\s,\s\rangle ds
	&= -\int_0^t\int_\Omega D|\na\s|^2 dxds 
	+ \int_0^t\int_\Omega D[|\s|]_M\na\s:\bigg(\frac{\s}{|\s|}\otimes\E\bigg)dxds 
	\label{en.s} \\
	&\phantom{xx}{}- \int_0^t\int_\Omega\frac{|\s|^2}{\tau}dxds \nonumber \\
	&\le -\int_0^t\int_\Omega D|\na\s|^2 dxds 
	+ c(M)\int_0^t\int_\Omega|\na\s|\,|\E|dxds
	- \int_0^t\int_\Omega\frac{|\s|^2}{\tau}dxds \nonumber \\
	&\le -c\int_0^t\int_\Omega |\na\s|^2 dxds + c(M)\int_0^t\int_\Omega|\E|^2 dxds.
	\nonumber
\end{align}
Next, we have the energy estimate for the Maxwell equations 
\eqref{fix.M1}-\eqref{fix.M2}:
\begin{align*}
  \frac12\int_0^t & \frac{d}{dt}\int_\Omega\big(|\E|^2 + |\H|^2\big)dxds \\
	&= \int_0^t\int_\omega D\big(\na R_\eps^x(\rho)\cdot\E - [\rho]_M|\E|^2\big)dxds
	- \int_0^t\int_\omega\pa_t R_\eps^t(\m)\cdot\H dxds \\
	&\le \delta\int_0^t\int_\omega\big(|\na R_\eps^x(\rho)|^2 + |\pa_t R_\eps^t(\m)|^2
	\big)dxds + c(\delta)\int_0^t\int_\omega |\H|^2 dxds,
\end{align*}
where $\delta>0$ is arbitrary. By the properties \eqref{2.Rx2} and \eqref{2.Rt2}
of the regularization operators, it follows that
\begin{align*}
  \frac12\int_0^t \frac{d}{dt}\int_\Omega\big(|\E|^2 + |\H|^2\big)dx
  &\le \delta\int_0^t\|\rho(s)\|_{H^1(\Omega)}^2 ds 
	+ \delta\int_0^t\|\pa_t\m(s)\|_{L^2(\omega)}^2 ds \\
	&\phantom{xx}{}+ \delta\int_0^t\|\m(s)\|_{L^2(\omega)}^2 ds
	+ c(\delta)\int_0^t\int_\omega |\H|^2 dxds.
\end{align*}
The $L^2$ norm of $\na\rho$ can be absorbed by
the corresponding term in \eqref{en.rho},
choosing $\delta>0$ sufficiently small. 
Estimate \eqref{llg.estm} can be formulated as
$$
  \frac{d}{dt}\int_\omega|\na\m(t)|^2 dx + \eps\int_\omega|\Delta\m|^2 dx
	+ \int_\omega|\pa_t\m|^2 dx \le c\int_\omega(|\H|^2+|\s|^2) dx.
$$
Combining the above estimates, \eqref{en.1} becomes, after time integration,
$$
  S(t) + c\int_0^t\int_\Omega\big(|\na\rho|^2 + |\na\s|^2\big)dxds 
	+ \int_0^t\int_\omega|\pa_t\m|^2 dxds \le c + c(M)\int_0^t S(s)ds.
$$
An application of Gronwall's lemma ends the proof.
\end{proof}

\begin{corollary}[Solution of the regularized problem]\label{coro.reg}
There exists a weak solution $(\rho,\s,\E,$ $\H,\m)$ to \eqref{2.rho}-\eqref{2.m}
satisfying the initial and boundary conditions \eqref{1.ibc.m}, 
\eqref{1.ic.M}-\eqref{1.bc.M1}, and \eqref{1.ic.rho}-\eqref{1.bc.rhoN}
as well as the regularity properties stated in Theorem \ref{thm.ex}.
\end{corollary}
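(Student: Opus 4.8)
The plan is to deduce the corollary from the Leray--Schauder fixed-point theorem applied to the operator $F$ of Section~\ref{sec.strat}, whose three constituent subproblems are by now rigorously solvable: step~I by Lemma~\ref{lem.max}, step~II by Lemma~\ref{lem.llg}, and step~III by standard linear parabolic theory. Indeed, in step~III the drift coefficients $\sigma[\rho^*]_M\E$ and $\sigma[|\s^*|]_M(\s^*/|\s^*|)\otimes\E$ are bounded and globally Lipschitz, with $\E\in L^\infty(0,T;L^2(\Omega))$ by Lemma~\ref{lem.max}; after subtracting an $H^1(\Omega)$-lift of the inhomogeneous Dirichlet datum $\sigma\rho_D$, a Galerkin approximation together with the $L^2$ energy estimate (the drift terms being absorbed by Young's inequality, e.g.\ $\int_\Omega\na\rho^*\cdot[\rho^*]_M\E\,dx\le M\|\na\rho^*\|_{L^2(\Omega)}\|\E\|_{L^2(\Omega)}$) yields $(\rho^*,\s^*)\in L^2(0,T;H^1(\Omega))\cap H^1(0,T;H_D^1(\Omega)')$. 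Thus $F$ is well defined on $X\times Y\times[0,1]$, satisfies $F(\rho,\s;\m;0)=(0,0;0)$, and, as recorded in Section~\ref{sec.strat}, is continuous and compact (the compactness resting on Aubin's lemma and on the uniqueness statements of Lemmas~\ref{lem.max} and~\ref{lem.llg}).

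It then remains to bound all fixed points of $F(\cdot,\sigma)$, $\sigma\in[0,1]$, uniformly in the norm of $X\times Y$, which is exactly Lemma~\ref{lem.est}: a fixed point satisfies $S(t)\le c_2(M)$ on $(0,T)$, bounding $\rho,\s$ in $L^\infty(0,T;L^2(\Omega))\hookrightarrow X$, $\E,\H$ in $L^\infty(0,T;L^2(\Omega))$, and $\m$ in $L^\infty(0,T;H^1(\omega))$; together with the bound on $\pa_t\m$ in $L^2(0,T;L^2(\omega))$ this gives $\m$ bounded in $H^1(0,T;L^2(\omega))\hookrightarrow C^0([0,T];L^2(\omega))=Y$, and the bound is independent of $\sigma$ (and of $\eps$). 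The Leray--Schauder theorem then produces a fixed point $(\rho,\s;\m)$ of $F(\cdot,1)$; together with the $(\E,\H)$ of step~I, the equations solved in steps~I--III coincide, at $\sigma=1$, with the weak formulation of \eqref{2.rho}--\eqref{2.m} and the initial and boundary conditions \eqref{1.ibc.m}, \eqref{1.ic.M}--\eqref{1.bc.M1}, \eqref{1.ic.rho}--\eqref{1.bc.rhoN}.

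The asserted regularity is then read off directly: $(\E,\H)\in C^0([0,T];L^2(\Omega))^6$ from Lemma~\ref{lem.max}; $\m\in L^\infty(0,T;H^1(\omega))\cap L^2(0,T;H^2(\omega))$, $\pa_t\m\in L^2(0,T;L^2(\omega))$, $|\m|\le1$ from Lemma~\ref{lem.llg}; $(\rho,\s)\in L^2(0,T;H^1(\Omega))$ from step~III and the gradient bound of Lemma~\ref{lem.est}, while $\pa_t\rho,\pa_t\s\in L^2(0,T;H_D^1(\Omega)')$ follows from \eqref{2.rho}--\eqref{2.s} since all fluxes are bounded in $L^2(0,T;L^2(\Omega))$. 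Finally, $\rho\ge0$ is obtained by a standard Stampacchia argument applied to \eqref{2.rho}, testing with the negative part $\rho_-=\min\{0,\rho\}$ (admissible in $L^2(0,T;H_D^1(\Omega))$ since $\rho=\rho_D\ge0$ on $\Gamma_D$), using that $[\rho]_M$ vanishes on $\{\rho\le0\}$ and that $\rho^0\ge0$; the extension convention gives $\m=0$ in $\Omega\setminus\omega$.

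I expect the only genuinely delicate point to be the part of the construction merely sketched in Section~\ref{sec.strat}: verifying that step~III produces a solution with the claimed $L^2(0,T;H^1(\Omega))\cap H^1(0,T;H_D^1(\Omega)')$ regularity under mixed Dirichlet--Neumann data, and that $F$ is continuous as a map $X\times Y\times[0,1]\to X\times Y$. Continuity requires passing to the limit in the truncated nonlinearities $[\rho]_M\E$ and $[|\s|]_M\s/|\s|$ along sequences converging in $X\times Y$, which is precisely where the Aubin-type compactness of the underlying subproblems is used; everything else is a direct assembly of Lemmas~\ref{lem.max}, \ref{lem.llg} and~\ref{lem.est} with the Leray--Schauder theorem.
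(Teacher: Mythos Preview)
Your proposal is correct and follows the same route as the paper: well-definedness of $F$ via Lemmas~\ref{lem.max} and~\ref{lem.llg} and standard parabolic theory for step~III, continuity and compactness of $F$ as sketched in Section~\ref{sec.strat}, uniform fixed-point bounds from Lemma~\ref{lem.est}, and then the Leray--Schauder theorem. You have simply spelled out in more detail what the paper compresses into two sentences; your explicit remarks on reading off the regularity and on $\rho\ge 0$ and $|\m|\le 1$ (rather than $|\m|=1$, which indeed only comes after the limit $\eps\to 0$) are accurate refinements.
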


\begin{proof}
Lemma \ref{lem.est} provides uniform estimates for the fixed-point operator
$F$ defined in Section \ref{sec.strat}. Thus, the result follows from the
Leray-Schauder fixed-point theorem.
\end{proof}

%%%%%%%%%%%%%%%%%%%%%%%%%%%%%%%%%%%%%%%%%%%%%%%%%%%%%%%%%%%%%%%%%%%%%%%%%%%%%%

\subsection{The limit $\eps\to 0$}\label{sec.eps}

The estimate in Lemma \ref{lem.est} is independent of $\eps$, which
allows us to perform the limit $\eps\to 0$.

\begin{lemma}\label{lem.eps}
There exists a weak solution $(\rho,\s,\E,\H,\m)$ to 
\begin{align}
  & \pa_t\rho - \diver(D(\na\rho-[\rho]_M\E) = 0, \label{eps.rho} \\
  & \pa_t\s - \diver\bigg(D\bigg(\na\s-[|\s|]_M\frac{\s}{|\s|}\otimes\E\bigg)\bigg)
	+ \gamma\m\times[|\s|]_M\frac{\s}{|\s|} = -\frac{\s}{\tau}, \label{eps.s} \\
	& \pa_t\E - \curl\H = D(\na\rho-[\rho]_M\E), \label{eps.E} \\
	& \pa_t\H + \curl E = -\pa_t\m \quad\mbox{in }\Omega\times(0,T), \label{eps.H} \\
	& \pa_t\m = \m\times(\Delta\m+\H+\beta\s) 
	- \alpha\m\times(\m\times(\Delta\m+\H+\beta\s))
	\quad\mbox{in }\omega\times(0,\omega), \label{eps.m}
\end{align}
with the initial and boundary conditions \eqref{1.ibc.m}, 
\eqref{1.ic.M}-\eqref{1.bc.M1}, \eqref{1.ic.rho}-\eqref{1.bc.rhoN},
satisfying the regularity properties stated in Theorem \ref{thm.ex} 
and the constraint $|\m|=1$ in $\omega\times(0,T)$.
\end{lemma}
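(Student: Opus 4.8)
The plan is to pass to the limit $\eps\to0$ in the family $(\rho_\eps,\s_\eps,\E_\eps,\H_\eps,\m_\eps)$ of weak solutions to the regularized problem supplied by Corollary~\ref{coro.reg} (with $\sigma=1$). By Lemma~\ref{lem.est} the functional $S$ is bounded uniformly in $\eps$, which gives $\eps$-uniform bounds for $\rho_\eps-\rho_D,\s_\eps,\E_\eps,\H_\eps$ in $L^\infty(0,T;L^2(\Omega))$, for $\rho_\eps,\s_\eps$ in $L^2(0,T;H^1(\Omega))$, for $\na\m_\eps$ in $L^\infty(0,T;L^2(\omega))$ and for $\pa_t\m_\eps$ in $L^2(0,T;L^2(\omega))$; Lemma~\ref{lem.llg} adds $|\m_\eps|\le1$ and $\eps^{1/2}\|\m_\eps\|_{L^2(0,T;H^2(\omega))}\le c$. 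Testing the drift-diffusion equations as usual yields $\eps$-uniform bounds for $\pa_t\rho_\eps,\pa_t\s_\eps$ in $L^2(0,T;H_D^1(\Omega)')$ (the truncated drift and coupling terms are pointwise bounded by $M$, resp.\ by $M|\m_\eps|\le M$). The decisive extra estimate is a uniform bound for $\m_\eps\times\Delta\m_\eps$ in $L^2(0,T;L^2(\omega))$: write \eqref{2.m} as $\pa_t\m_\eps-\eps\Delta\m_\eps=\mathbf{w}_\eps$ with $\mathbf{w}_\eps=\m_\eps\times\mathbf{v}_\eps-\alpha\,\m_\eps\times(\m_\eps\times\mathbf{v}_\eps)$ and $\mathbf{v}_\eps=\Delta\m_\eps+\H_\eps+\beta\s_\eps$; since $\pa_t\m_\eps$ is bounded in $L^2(0,T;L^2(\omega))$ and $\eps\Delta\m_\eps=\eps^{1/2}(\eps^{1/2}\Delta\m_\eps)\to0$ there, $\mathbf{w}_\eps$ is $\eps$-uniformly bounded in $L^2(0,T;L^2(\omega))$. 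Using $\m_\eps\times\mathbf{w}_\eps=\m_\eps\times(\m_\eps\times\mathbf{v}_\eps)+\alpha|\m_\eps|^2\,\m_\eps\times\mathbf{v}_\eps$ (a consequence of $\m\times(\m\times(\m\times\mathbf{a}))=-|\m|^2\,\m\times\mathbf{a}$), the pair $(\m_\eps\times\mathbf{v}_\eps,\ \m_\eps\times(\m_\eps\times\mathbf{v}_\eps))$ solves a $2\times2$ linear system of determinant $1+\alpha^2|\m_\eps|^2\ge1$, so $\m_\eps\times\mathbf{v}_\eps=(\mathbf{w}_\eps+\alpha\,\m_\eps\times\mathbf{w}_\eps)/(1+\alpha^2|\m_\eps|^2)$ is bounded in $L^2(0,T;L^2(\omega))$, and hence so is $\m_\eps\times\Delta\m_\eps=\m_\eps\times\mathbf{v}_\eps-\m_\eps\times(\H_\eps+\beta\s_\eps)$.

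Next I extract a subsequence with: $\rho_\eps\to\rho$, $\s_\eps\to\s$ strongly in $L^2(0,T;L^2(\Omega))$ and weakly in $L^2(0,T;H^1(\Omega))$ (Aubin's lemma \cite{Sim87}); $\m_\eps\to\m$ strongly in $C^0([0,T];L^2(\omega))$, hence in $C^0([0,T];L^p(\omega))$ for every $p<\infty$ by interpolation with the $L^\infty(0,T;H^1)$ bound, weakly-$*$ in $L^\infty(0,T;H^1(\omega))$, with $\pa_t\m_\eps\rightharpoonup\pa_t\m$ in $L^2(0,T;L^2(\omega))$; $\E_\eps\rightharpoonup\E$, $\H_\eps\rightharpoonup\H$ weakly-$*$ in $L^\infty(0,T;L^2(\Omega))$; $\m_\eps\times\Delta\m_\eps\rightharpoonup\boldsymbol\xi$ weakly in $L^2(0,T;L^2(\omega))$; and $\eps\Delta\m_\eps\to0$ in $L^2(0,T;L^2(\omega))$. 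The regularizations are removed as in \cite{Joc96}: $R_\eps^x$ and $R_\eps^t$ are uniformly bounded on $L^2$, so from $\rho_\eps\to\rho$, $\m_\eps\to\m$ in $L^2$ one gets $R_\eps^x(\rho_\eps)\to\rho$ and $R_\eps^t(\m_\eps)\to\m$ in $L^2(0,T;L^2)$, while \eqref{2.Rx2} and \eqref{2.Rt2} force $\na R_\eps^x(\rho_\eps)\rightharpoonup\na\rho$ and $\pa_t R_\eps^t(\m_\eps)\rightharpoonup\pa_t\m$ weakly in $L^2(0,T;L^2)$. To identify $\boldsymbol\xi$ I use $\m_\eps\times\Delta\m_\eps=\diver(\m_\eps\times\na\m_\eps)$ together with the Neumann condition $\na\m_\eps\cdot\bnu=0$: since $\m_\eps\to\m$ strongly in $L^2(0,T;L^4(\omega))$ and $\na\m_\eps\rightharpoonup\na\m$ weakly in $L^2(0,T;L^2(\omega))$, one has $\m_\eps\times\na\m_\eps\rightharpoonup\m\times\na\m$ weakly in $L^2(0,T;L^{4/3}(\omega))$, and passing to the limit in the divergence gives $\boldsymbol\xi=\diver(\m\times\na\m)=\m\times\Delta\m\in L^2(0,T;L^2(\omega))$; pairing with a test function $\boldsymbol\phi$ and using $\boldsymbol\phi\times\m_\eps\to\boldsymbol\phi\times\m$ strongly in $L^2(0,T;L^2(\omega))$ likewise gives $\m_\eps\times(\m_\eps\times\Delta\m_\eps)\rightharpoonup\m\times(\m\times\Delta\m)$ weakly in $L^2(0,T;L^2(\omega))$.

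With these convergences the passage to the limit in \eqref{2.rho}--\eqref{2.m} is routine. All truncated and coupling terms -- $[\rho_\eps]_M\E_\eps$, the radial retraction $\min(M,|\s_\eps|)\s_\eps/|\s_\eps|$ and $\min(M,|\s_\eps|)(\s_\eps/|\s_\eps|)\otimes\E_\eps$, and $\m_\eps\times(\cdot)$, $\m_\eps\times(\m_\eps\times(\cdot))$ applied to $\H_\eps$, $\s_\eps$ and to that retraction -- converge to their natural limits by strong$\times$weak arguments, using the strong $L^2$ (and $L^p$) convergence of $\rho_\eps,\s_\eps,\m_\eps$, the bound $\le M$ and Lipschitz continuity of the retraction, the bound $|\m_\eps|\le1$, and the weak-$*$ convergence of $\E_\eps,\H_\eps$; the viscous term $\eps\Delta\m_\eps$ drops out. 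The $\E$- and $\H$-equations, being linear, pass to the limit directly with the above convergences of $\na R_\eps^x(\rho_\eps)$, $[\rho_\eps]_M\E_\eps$ and $\pa_t R_\eps^t(\m_\eps)$. This yields a weak solution $(\rho,\s,\E,\H,\m)$ of \eqref{eps.rho}--\eqref{eps.m}; the regularity claimed in Theorem~\ref{thm.ex} follows from the uniform bounds and weak lower semicontinuity, with $\pa_t\rho,\pa_t\s\in L^2(0,T;H_D^1(\Omega)')$ read off from the limit equations and $\E,\H\in C^0([0,T];L^2(\Omega))$ because $(\E,\H)$ is the mild solution of the limiting Maxwell system with $L^2(0,T;L^2)$ source (cf.\ Lemma~\ref{lem.max}); the initial and boundary data are attained thanks to the strong and uniform convergences, and $\rho\ge0$ together with $\m=0$ in $\Omega\backslash\omega$ are preserved under $L^2$ limits.

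Finally, $|\m|=1$: since $\m\in C^0([0,T];L^2(\omega))$ with $\m(0)=\m^0$, $|\m^0|=1$, and $\pa_t\m\in L^2(0,T;L^2(\omega))$, the chain rule gives $\pa_t|\m|^2=2\,\pa_t\m\cdot\m$ in $L^1(\omega\times(0,T))$, while the right-hand side of \eqref{eps.m} is pointwise orthogonal to $\m$, so $\pa_t\m\cdot\m=0$ a.e.\ and hence $|\m(x,t)|=1$ a.e.\ in $\omega\times(0,T)$. The step I expect to be the main obstacle is the passage to the limit in the Landau-Lifshitz nonlinearity: it contains terms that are formally quadratic in $\na\m_\eps$, for which no strong $H^1$ compactness of $\m_\eps$ is available, and the remedy is precisely the $\eps$-uniform $L^2(0,T;L^2(\omega))$ bound for $\m_\eps\times\Delta\m_\eps$ derived above from the cross-product algebra, which turns every dangerous product into a strong$\times$weak pairing; a secondary technical point is carrying the $\eps$-dependent regularizations $R_\eps^x,R_\eps^t$ to the limit while their arguments $\rho_\eps,\m_\eps$ also vary.
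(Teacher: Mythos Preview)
Your proposal is correct and follows the same overall strategy as the paper: uniform bounds from Lemma~\ref{lem.est} and Lemma~\ref{lem.llg}, Aubin compactness for $(\rho_\eps,\s_\eps,\m_\eps)$, weak-$*$ compactness for $(\E_\eps,\H_\eps)$, removal of the regularizations $R_\eps^x,R_\eps^t$ as in \cite{Joc96}, and the conclusion $|\m|=1$ from orthogonality of the right-hand side of \eqref{eps.m} to $\m$.

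The only noteworthy difference is in the organization of the limit in the LLG nonlinearity. The paper first rewrites \eqref{2.m} via the inverse $G(\alpha\m^{(\eps)})$ of the linear map $\u\mapsto\u-\alpha\m^{(\eps)}\times\u$, obtaining $G(\alpha\m^{(\eps)})(\pa_t\m^{(\eps)}-\eps\Delta\m^{(\eps)})=\m^{(\eps)}\times(\Delta\m^{(\eps)}+\H^{(\eps)}+\beta\s^{(\eps)})$, so that only a single cross product with $\Delta\m^{(\eps)}$ survives on the right; it then integrates by parts against a test function to turn this into $-\na\phi\cdot(\m^{(\eps)}\times\na\m^{(\eps)})$ and passes to the limit by a strong$\times$weak argument. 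You instead keep the original form of the equation and first extract an $\eps$-uniform $L^2(0,T;L^2(\omega))$ bound for $\m_\eps\times\Delta\m_\eps$ by solving the $2\times2$ system in $(\m_\eps\times\mathbf v_\eps,\,\m_\eps\times(\m_\eps\times\mathbf v_\eps))$; you then identify its weak limit via the same divergence identity $\m_\eps\times\Delta\m_\eps=\diver(\m_\eps\times\na\m_\eps)$ and handle the double cross product separately through $(\m_\eps\times(\m_\eps\times\Delta\m_\eps))\cdot\boldsymbol\phi=(\m_\eps\times\Delta\m_\eps)\cdot(\boldsymbol\phi\times\m_\eps)$. The two routes are algebraically equivalent---your formula $\m_\eps\times\mathbf v_\eps=(\mathbf w_\eps+\alpha\,\m_\eps\times\mathbf w_\eps)/(1+\alpha^2|\m_\eps|^2)$ is exactly the action of the paper's $G$---and both rest on the same divergence identity. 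Your version has the merit of making explicit that $\m\times\Delta\m\in L^2(0,T;L^2(\omega))$ for the limit; the paper's $G$-reformulation is a bit more economical since the double cross product never has to be treated on its own.
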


\begin{proof}
We denote the solution to \eqref{2.rho}-\eqref{2.m} with a superindex $\eps$
to indicate the dependence on this parameter. Lemma \ref{lem.est}
gives the uniform estimates
\begin{align*}
  \|\rho^{(\eps)}\|_{L^2(0,T;H^1(\Omega))} + \|\s^{(\eps)}\|_{L^2(0,T;H^1(\Omega))}
  &\le c, \\
	\|\m^{(\eps)}\|_{L^\infty(0,T;H^1(\omega))} 
	+ \|\pa_t\m^{(\eps)}\|_{L^2(0,T;L^2(\omega))}
	&\le c, \\
	\|\E^{(\eps)}\|_{L^\infty(0,T;L^2(\Omega))} 
	+ \|\H^{(\eps)}\|_{L^\infty(0,T;L^2(\Omega))} &\le c.
\end{align*}
These estimates and \eqref{2.rho}-\eqref{2.s} show that
$$
  \|\pa_t\rho^{(\eps)}\|_{L^2(0,T;H_D^1(\Omega)')} 
	+ \|\pa_t\s^{(\eps)}\|_{L^2(0,T;H^1(\Omega)')}
	\le c,
$$
where the constant $c>0$ may depend on the truncation parameter $M$ but not on $\eps$.
Therefore, we infer from the Aubin lemma \cite{Sim87} and weak compactness
that, up to subsequences which are not relabeled, as $\eps\to 0$,
\begin{align*}
  \rho^{(\eps)}\to \rho, \ \s^{(\eps)}\to\s &\quad\mbox{strongly in }
	L^2(0,T;L^2(\Omega)), \\
	\rho^{(\eps)}\rightharpoonup\rho,\ \s^{(\eps)}\rightharpoonup\s
	&\quad\mbox{weakly in }L^2(0,T;H^1(\Omega)), \\
	\pa_t\rho^{(\eps)}\rightharpoonup\pa_t\rho &\quad\mbox{weakly in }
	L^2(0,T;H_D^1(\Omega)'), \\
	\pa_t\s^{(\eps)}\rightharpoonup\pa_t\s &\quad\mbox{weakly in }
	L^2(0,T;H^1_D(\Omega)'), \\
	\E^{(\eps)}\rightharpoonup^*\E, \ \H^{(\eps)}\rightharpoonup^*\H
	&\quad\mbox{weakly* in }L^\infty(0,T;L^2(\Omega)), \\
  \m^{(\eps)}\to\m &\quad\mbox{strongly in }C^0([0,T];L^p(\omega)), \ p<\infty, \\
	\m^{(\eps)}\rightharpoonup^*\m &\quad\mbox{weakly* in }L^\infty(0,T;H^1(\omega)), \\
	\pa_t\m^{(\eps)}\rightharpoonup\pa_t\m &\quad\mbox{weakly in }
	L^2(0,T;L^2(\omega)).
\end{align*}
According to \cite[p.~671]{Joc96}, $\na R_\eps^x(\rho^{(\eps)})$ and
$\pa_tR_\eps^t(\m^{(\eps)})$ converge weakly in $L^2$ to $\na\rho$ and
$\pa_t\m$, respectively, taking into account \eqref{2.Rx2}, \eqref{2.Rt2}.
These convergences allow us to perform the limit $\eps\to 0$ in
\eqref{2.rho}-\eqref{2.H}, showing that $(\rho,\s,\E,\H)$ is a weak solution
to \eqref{eps.rho}-\eqref{eps.H}.

It remains to pass to the limit $\eps\to 0$ in the regularized
LLG equation \eqref{2.m}. For this, we observe
that \eqref{2.m} can be rewritten as 
$\mathbf{v}-\alpha \m^{(\eps)}\times\mathbf{v}=\mathbf{f}_\eps$,
where $\mathbf{v}=\m^{(\eps)}\times(\Delta\m^{(\eps)}+\H^{(\eps)}+\beta\s^{(\eps)})$ 
and $\mathbf{f}_\eps=\pa_t\m^{(\eps)}-\eps\Delta\m^{(\eps)}$.
The solution of this equation is $\mathbf{v}=G(\alpha\m^{(\eps)})\mathbf{f}_\eps$,
where 
$$
  G(\alpha\m^{(\eps)}):\R^3\to\R^3, \quad
	G(\alpha\m^{(\eps)})\mathbf{f} = (1+|\mathbf{v}|^2)^{-1}\big(\mathbf{f} 
	+ \alpha\m^{(\eps)}\times\mathbf{f} 
	+ (\alpha\m^{(\eps)}\cdot\mathbf{f})\mathbf{f}\big),
$$
is the inverse of the mapping $\u\mapsto \u-\alpha\m^{(\eps)}\times\u$ for
$\u\in\R^3$. Thus, \eqref{2.m} rewrites as
$$
  G(\alpha\m^{(\eps)})\big(\pa_t\m^{(\eps)}-\eps\Delta\m^{(\eps)}\big)
	= \m^{(\eps)}\times(\Delta\m^{(\eps)}+\H^{(\eps)}+\beta\s^{(\eps)}).
$$
Multiplying this equation with the test function $\phi\in C^\infty(\omega)$ 
and integrating over $\omega$, integration by parts leads to
\begin{align*}
  \int_\omega & G(\alpha\m^{(\eps)})\pa_t\m^{(\eps)} \phi dx
	+ \eps\int_\omega\big(G(\alpha\m^{(\eps)})\na\m^{(\eps)}\cdot\na\phi
	+ \alpha\phi G'(\alpha\m^{(\eps)})|\na\m^{(\eps)}|^2\big)dx \\
	&= -\int_\omega\na\phi\cdot(\m^{(\eps)}\times\na\m^{(\eps)}) dx
	+ \int_\omega\phi\m^{(\eps)}\times(\H^{(\eps)}+\beta\s^{(\eps)})dx.
\end{align*}
By the above convergence results, we can let $\eps\to 0$ to obtain
$$
  \int_\omega G(\alpha\m)\pa_t\m \phi dx = -\int_\omega\na\phi\cdot(\m\times\na\m) dx
	+ \int_\omega\phi\m\times(\H+\beta\s)dx
$$
a.e.\ in $(0,T)$, which is the weak formulation of \eqref{eps.m}. 
Taking the inner product of \eqref{eps.m}
and $\m$, we conclude immediately that $|\m(t)|=|\m(0)|=1$ in $\omega\times(0,T)$.
\end{proof}

We conclude this section by showing that \eqref{1.M2} and \eqref{1.bc.M2} hold
in a weak sense. Let $\phi\in C^\infty_0(\Omega\backslash\overline{\Gamma}_D)$.
Taking the inner product of \eqref{eps.E} and $\na\phi$, integrating over
$\Omega$, and employing \eqref{eps.rho} gives
$$
  \frac{d}{dt}\int_\Omega\E\cdot\na\phi dx
	= \int_\Omega\curl\H\cdot\na\phi dx
	+ \int_\Omega D(\na\rho-[\rho]_M\E)\cdot\na\phi dx
  = -\frac{d}{dt}\int_\Omega\rho\phi dx.
$$
Consequently,
\begin{equation}\label{eps.aux}
  \frac{d}{dt}\int_\Omega(\diver\E-\rho)\phi dx 
	= \frac{d}{dt}\int_{\Gamma_N}\E\cdot\bnu\phi d\sigma.
\end{equation}
If $\phi\in C_0^\infty(\Omega)$,
we infer that $\diver\E(t)-\rho(t)$ is constant in time. Taking into account
the first equation in \eqref{hypo.ic}, it follows that $\diver\E-\rho(t)=-C(x)$ holds
for all $t\ge 0$. When $\phi\in C^\infty_0(\Omega\backslash\overline{\Gamma}_D)$,
\eqref{eps.aux} becomes $(d/dt)\int_{\Gamma_N}\E\cdot\bnu\phi d\sigma=0$
and then the third equation in \eqref{hypo.ic} shows that $\E(t)\cdot\bnu=0$
on $\Gamma_N$ for $t\ge 0$.
Finally, taking the divergence of \eqref{eps.H}, it follows
that $ \pa_t\diver(\H+\m)=0$ in the sense of distributions
and, because of the second equation in \eqref{hypo.ic},
$\diver(\H(t)+\m(t))=0$ for $t\ge 0$. 

%%%%%%%%%%%%%%%%%%%%%%%%%%%%%%%%%%%%%%%%%%%%%%%%%%%%%%%%%%%%%%%%%%%%%%%%%%%%%%

\subsection{Uniform $L^\infty$ bounds for the charge and spin densities}
\label{sec.infty}

We show that $\rho$ and $\s$ are bounded uniformly in $M$ which allows us to remove
the truncation in \eqref{eps.rho}-\eqref{eps.E}.

\begin{lemma}\label{lem.infty}
Let $(\rho,\s,\E,\H,\m)$ be a weak solution to \eqref{eps.rho}-\eqref{eps.m}
satisfying the initial and boundary conditions \eqref{1.ibc.m}, 
\eqref{1.ic.M}-\eqref{1.bc.M1}, \eqref{1.ic.rho}-\eqref{1.bc.rhoN}, and equations
\eqref{1.M2}, \eqref{1.bc.M2}, $|\m|=1$ in $\omega\times(0,T)$.
Then $\rho\ge 0$ in $\Omega\times(0,T)$ and there exists $c(T)>0$,
independent of the truncation parameter $M$ and the parameter $\beta$, such that
$$
  \|\rho\|_{L^\infty(0,T;L^\infty(\Omega))} + \|\s\|_{L^\infty(0,T;L^\infty(\Omega))}
	\le c(T).
$$
\end{lemma}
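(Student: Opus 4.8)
The plan is to establish the $L^\infty$ bound by a Moser-type iteration applied to the charge density $\rho$ first, and then—building on the resulting bound for $|\E|$ in suitable spaces—to $|\s|$. First I would prove the nonnegativity $\rho\ge 0$: using $\rho^-=\max\{0,-\rho\}$ as a test function in the weak formulation of \eqref{eps.rho}, the truncation $[\rho]_M$ vanishes on the set $\{\rho<0\}$ (since $[z]_M=0$ for $z\le 0$), so the drift term drops out and one gets $\frac{d}{dt}\int_\Omega|\rho^-|^2\,dx \le 0$; together with $\rho^0\ge 0$ (which follows from $\rho^0\in H^1$ and the physical setup—or should be added to the hypotheses) and $\rho_D\ge 0$ on $\Gamma_D$, this gives $\rho\ge 0$, hence $[\rho]_M=\min\{M,\rho\}\le\rho$.

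The core is the Moser iteration for $\rho$. For $p\ge 1$ I would test \eqref{eps.rho} with $(\rho-\rho_D)|\rho-\rho_D|^{p-1}$ (shifted to accommodate the inhomogeneous Dirichlet data, using $\rho_D\in H^1\cap L^\infty$—note $H^1(\Omega)\hookrightarrow L^q$ for all $q<\infty$ in 2D, but $L^\infty$ must be assumed or $\rho_D$ taken bounded; I would add $\rho_D\in L^\infty$ to the hypotheses if not already implied). This yields, schematically,
\begin{align*}
  \frac{1}{p+1}\frac{d}{dt}\int_\Omega|\rho-\rho_D|^{p+1}dx
  + c\,p\int_\Omega |\rho-\rho_D|^{p-1}|\na(\rho-\rho_D)|^2 dx
  \le C\int_\Omega |\rho-\rho_D|^{p-1}|\na(\rho-\rho_D)|\,\rho\,|\E|\,dx + \text{l.o.t.}
\end{align*}
The left-side gradient term is $\tfrac{4cp}{(p+1)^2}\int_\Omega|\na(|\rho-\rho_D|^{(p+1)/2})|^2$. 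Absorbing part of the right-hand side into it and using the 2D Gagliardo–Nirenberg / Ladyzhenskaya inequality $\|w\|_{L^4}^2\le C\|w\|_{L^2}\|w\|_{H^1}$ on $w=|\rho-\rho_D|^{(p+1)/2}$ together with the bound $\E\in L^\infty(0,T;L^2(\Omega))$ from Lemma~\ref{lem.est} (which is $\eps$- and $M$-independent), one derives a recursive inequality of the form $\|\rho-\rho_D\|_{L^\infty(0,T;L^{p+1})}^{p+1} + \dots \le (Cp)^{a}\big(\sup_s\|\rho(s)-\rho_D\|_{L^{(p+1)/2}}^{p+1} + 1\big)$, where the key point is that the constant grows only polynomially in $p$. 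Iterating over $p_k = 2^k$ and taking $k\to\infty$ gives $\|\rho\|_{L^\infty(0,T;L^\infty(\Omega))}\le c(T)$, with $c(T)$ independent of $M$ and $\beta$ (the charge equation does not see $\beta$ at all). The $L^\infty$ bound on $\rho$ then also upgrades the Maxwell estimate: revisiting the energy estimate for \eqref{eps.E}–\eqref{eps.H} with $[\rho]_M\le\|\rho\|_\infty$ gives $\E,\H\in L^\infty(0,T;L^2(\Omega))$ with constant independent of $M$ (indeed the term $-\int[\rho]_M|\E|^2$ has a good sign).

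For $\s$ I would run the analogous iteration: test \eqref{eps.s} with $\s|\s|^{p-1}$. The spin-flip term $-\int |\s|^{p+1}/\tau\le 0$ helps, the rotation term $\gamma\int \m\times[|\s|]_M\frac{\s}{|\s|}\cdot\s|\s|^{p-1}$ vanishes identically because $(\m\times\s)\cdot\s=0$, and the drift term is controlled exactly as for $\rho$ using $|\s|\le[|\s|]_M$-type bounds... wait, here $[|\s|]_M\le|\s|$ after we know nothing a priori, but $[|\s|]_M/|\s|\le 1$ so the drift contribution is $\le D\int|\s|^{p-1}|\na\s||\E|$, same structure as before. Hence the same Moser iteration, again using $\E\in L^\infty(0,T;L^2)$ and 2D Gagliardo–Nirenberg, yields $\|\s\|_{L^\infty(0,T;L^\infty(\Omega))}\le c(T)$. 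The main obstacle I anticipate is bookkeeping the $p$-dependence of all constants through the Gagliardo–Nirenberg step so that the product $\prod_k (Cp_k)^{a/p_k}$ converges—this requires writing the recursion in the sharp form $\|\cdot\|_{L^{p_{k+1}}}\le (C p_k)^{1/p_k}\|\cdot\|_{L^{p_k}}^{?}$ and verifying the exponents sum/telescope correctly; everything else (sign of the dissipative and reaction terms, vanishing of the cross-product term, handling the inhomogeneous boundary data) is routine. The refinement to time-uniform bounds (Remark~\ref{rem.Linfty}) would use the Moser–Alikakos variant, incorporating the dissipation to beat the growth in $t$.
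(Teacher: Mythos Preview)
Your proposal has a genuine gap that would make the argument circular. You plan to control the drift term in the Moser iteration via the bound $\E\in L^\infty(0,T;L^2(\Omega))$ ``from Lemma~\ref{lem.est} (which is $\eps$- and $M$-independent)''. But it is not $M$-independent: the constant $c_2(M)$ in Lemma~\ref{lem.est} explicitly depends on the truncation parameter $M$ (look at the estimates \eqref{en.rho}--\eqref{en.s}, where the drift terms produce factors of $M$). Hence your Moser recursion would yield $\|\rho\|_{L^\infty}\le c(T,\|\E\|_{L^\infty_t L^2_x})\le c(T,M)$, which is useless for removing the truncation. The subsequent ``upgrade'' of the Maxwell energy estimate is then also circular: to make $\|\E\|_{L^\infty_t L^2_x}$ independent of $M$ you would need $M$-independent control of $\|\na\rho\|_{L^2_{t,x}}$ and $\|\pa_t\m\|_{L^2_{t,x}}$, which again come only from the $M$-dependent functional $S$. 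This exact obstruction is pointed out in Remark~\ref{rem.Linfty}.

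The missing idea is to avoid the $L^2$ norm of $\E$ altogether by exploiting the Gauss law $\diver\E=\rho-C$. In the paper's proof one tests \eqref{eps.s} with $[|\s|]_M^{p-1}\s/|\s|$, and the drift contribution becomes
\[
D\int_\Omega [|\s|]_M\,\E\cdot\na\big([|\s|]_M^{p-1}\big)\,dx
=\frac{p-1}{p}D\int_\Omega\E\cdot\na\big([|\s|]_M^p\big)\,dx
=-\frac{p-1}{p}D\int_\Omega[|\s|]_M^p(\rho-C)\,dx.
\]
Since $\rho\ge 0$, this is bounded by $(p-1)D\|C\|_{L^\infty}\int_\Omega\phi_{p,M}(|\s|)\,dx$, with no reference to $\|\E\|$ or to $M$. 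Gronwall then gives $\|[|\s|]_M\|_{L^p}\le p^{1/p}e^{(p-1)D\|C\|_\infty t/p}\|\s^0\|_{L^p}$, and letting $p\to\infty$ yields the $M$-independent bound directly, without any Gagliardo--Nirenberg bookkeeping. The same trick works for $\rho$. Note that this is also why the paper requires $D$ constant here (otherwise the integration by parts produces an extra $\na D\cdot\E$ term and you are back to needing $\|\E\|_{L^2}$).
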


\begin{proof}
Using $\min\{0,\rho\}$ as a test function in \eqref{eps.rho}, it follows immediately
that $\rho\ge 0$. For the proof of $L^\infty$ bounds for $\rho$ and $\s$, 
we employ a Moser-type iteration method. For this, let $p\ge 2$. We employ
the test function $[|\s|]_M^{p-1}\s/|\s|\in L^2(0,T;H_0^1(\Omega))$ in the weak
formulation of \eqref{eps.s}. Observing that
$$
  \phi_{p,M}(\sigma) = \int_0^\sigma [u]_Mdu \ge \frac{1}{p}[\sigma]_M^p
	\quad\mbox{for }\sigma\ge 0,
$$
we find that
\begin{align}
  \frac{d}{dt} \phi_{p,M}(|\s|)dx 
	&= \bigg\langle\pa_t\s,[|\s|]_M^{p-1}\frac{\s}{|\s|}\bigg\rangle \nonumber \\
	&= -\sum_{i=1}^3 \int_\Omega D\bigg(\na s_i-[|\s|]_M\frac{\s}{|\s|}\E\bigg)
	\cdot\na\bigg([|\s|]_M^{p-1}\frac{\s}{|\s|}\bigg)dx 
	- \int_\Omega[|\s|]_M^{p-1}\frac{|\s|^2}{\tau}dx \label{inf.ddt} \\
	&= I_1 + I_2. \nonumber
\end{align}
The integral $I_2$ is clearly nonpositive. Since
\begin{equation}\label{nas}
  \na\frac{s_j}{|\s|} = \sum_{k=1}^3\bigg(\delta_{jk} - \frac{s_js_k}{|\s|^2}\bigg)
	\frac{\na s_k}{|\s|},
\end{equation}
we can write the remaining integral $I_1$ as
\begin{align*}
  I_1 &= -\sum_{i=1}^3 \int_\Omega D\bigg(\na s_i-[|\s|]_M\frac{s_i}{|\s|}\E\bigg) \\
	&\phantom{xx}{}\times\bigg(\na\big([|\s|]_M^{p-1}\big)\frac{s_i}{|\s|}
	+ [|\s|]_M^{p-1}\sum_{j=1}^3\frac{1}{|\s|}\bigg(\delta_{ij}-\frac{s_is_j}{|\s|^2}
	\bigg)\na s_j\bigg)dx \\
	&= -\int_\Omega D\na|\s|\cdot\na\big([|\s|]_M^{p-1}\big)dx
	- \sum_{i,j=1}^3\int_\Omega D\frac{[|\s|]_M^{p-1}}{|\s|}
	\bigg(\delta_{ij}-\frac{s_is_j}{|\s|^2}\bigg)\na s_i\cdot\na s_j dx \\
	&\phantom{xx}{}+ \int_\Omega D[|\s|]\E\cdot\na\big([|\s|]_M^{p-1}\big)dx 
	=: I_{3} + I_{4} + I_{5}.
\end{align*}

We show that $I_3\le 0$. Indeed, the definition of $[{}\cdot{}]_M$ implies that
$\na[|\s|]_M = \chi_{\{|\s|\le M\}}\na|\s|$, which yields
$$
  I_3 = -\int_\Omega D(p-1)[|\s|]_M^{p-2}\chi_{\{|\s|\le M\}}\big|\na|\s|\big|^2 dx
	\le 0.
$$
Furthermore, since the matrix $\mathbb{I}_{3\times 3}-\s\otimes\s/|\s|^2$
is positive semidefinite, $I_4\le 0$. For the final estimate of $I_5$,
we need the assumption that $D$ is constant (also see Remark \ref{rem.Linfty}).
Then, integrating by parts and employing the first equation in \eqref{1.M2}, 
we obtain
\begin{align*}
  I_5 &= D(p-1)\int_\Omega [|\s|]_M^{p-1}\E\cdot\na[|\s|]_M dx \\
	&= \frac{p-1}{p}D\int_\Omega\E\cdot\na\big([|\s|]_M^p\big)dx
	= -\frac{p-1}{p}D\int_\Omega[|\s|]_M^p (\rho-C)dx \\
	&\ge \frac{p-1}{p}D\|C\|_{L^\infty(\Omega)}\int_\Omega[|\s|]_M^p dx
	\ge (p-1)D\|C\|_{L^\infty(\Omega)}\int_\Omega\phi_{p,M}(|\s|)dx.
\end{align*}

Therefore, \eqref{inf.ddt} becomes
$$
  \frac{d}{dt}\int_\Omega\phi_{p,M}(|\s|)dx 
	\le (p-1)D\|C\|_{L^\infty(\Omega)}\int_\Omega\phi_{p,M}(|\s|)dx,
$$
and Gronwall's lemma allows us to conclude that
\begin{align*}
  \int_\Omega[|\s|]_M^p dx 
	&\le p\int_\Omega\phi_{p,M}(|\s(\cdot,t)|)dx \\
	&\le p\exp\big((p-1)D\|C\|_{L^\infty(\Omega)}t\big)
	\int_\Omega\phi_{p,M}(|\s(\cdot,0)|)dx \\
	&= p\exp\big((p-1)D\|C\|_{L^\infty(\Omega)}t\big)\int_\Omega|\s^0|^p dx,
\end{align*}
since $|\s^0|\le M$. Taking the $p$th root and passing to the limit $p\to \infty$,
we infer that
$$
  \big\|[|\s(\cdot,t)|]_M\big\|_{L^\infty(\Omega)}
	\le \exp(D\|C\|_{L^\infty(\Omega)}t)\|\s^0\|_{L^\infty(\Omega)}, \quad t\ge 0.
$$

Now, we choose $M>M_T:=\exp(D\|C\|_{L^\infty(\Omega)}T)\|\s^0\|_{L^\infty(\Omega)}$
and define $\Omega_M(t)=\{x\in \Omega:|\s(x,t)|>M\}$. If $\Omega_M(t)$ has positive
Lebesgue measure for some $0\le t\le T$, then
$$
  M < \big\|[|\s(\cdot,t)|]_M\big\|_{L^\infty(\Omega)}
	\le \exp(D\|C\|_{L^\infty(\Omega)}t)\|\s^0\|_{L^\infty(\Omega)} \le M_T < M,
$$
which is absurd. Thus, $\Omega_M(t)$ is a set of measure zero for a.e.\ $t\in(0,T)$,
which implies that $|\s(x,t)|\le M$ for a.e.\ $x\in\Omega$, $t\in(0,T)$.
Since $M$ is arbitrary in the interval $(M_T,\infty)$, we conclude that
$$
  \|\s(\cdot,t)\|_{L^\infty(\Omega)}\le M_T, \quad t\in(0,T).
$$

The proof of the boundedness of $\rho$ is similar using $(([\rho]_M-K)^+)^{p-1}$
with $M\ge K:=\max\{\|\rho_D\|_{L^\infty(\Gamma_D)},\|\rho^0\|_{L^\infty(\Omega)}\}$ 
as a test function in \eqref{fix.rho} (see \cite{Jue97}).
\end{proof}

\begin{remark}[Generalizations]\label{rem.Linfty}\rm
The boundedness result can be generalized using refined Moser iteration techniques.
For instance, following the proof of \cite{Joc96}, 
we may allow for nonconstant diffusion 
coefficients $D(x)$ in case that the electric field $\E$
is given. It turns out that the $L^\infty$ bounds of $\rho$ and $\s$ depend on the
$L^\infty(0,T;L^2(\Omega))$ norm of $\E$. Since in our proof, this norm depends
on the truncation parameter $M$, we cannot conclude the proof 
but the argument is valid
if the Maxwell equations are replaced by {\em given} functions $\E$ and $\H$.

It is possible to prove that the $L^\infty$ bounds for $\rho$ and $\s$  
are also uniform in time.
The idea is to exploit the gradient norm in $I_3$. Using $|\s|^{p-2}\s$
as a test function in the weak formulation of \eqref{eps.s} (this is possible
since we already know that $\s$ is bounded locally in time), we find after
some elementary computations that
\begin{align*}
  \frac{1}{p}\,\frac{d}{dt}\int_\Omega|\s|^{p}dx
	&+ 4\frac{p-1}{p^2}D\int_\Omega \big|\na|\s|^{p/2}\big|^2 dx \\
	&= \frac{p-1}{p}D\int_\Omega\E\cdot\na|\s|^{p}dx 
	- \frac{1}{\tau}\int_\Omega|\s|^{p} dx.
\end{align*}
Neglecting the last integral, integrating by parts in the first integral
on the right-hand side, and employing \eqref{1.M2},
$$
  \frac{d}{dt}\int_\Omega|\s|^{p}dx
	+ 4\frac{p-1}{p}D\int_\Omega \big|\na|\s|^{p/2}\big|^2 dx 
	\le (p-1)D\|C\|_{L^\infty(\Omega)}\int_\Omega|\s|^p dx.
$$
By the Gagliardo-Nirenberg inequality, we may replace the $L^p$ norm of
$\s$ on the right-hand side by its $L^{p/2}$ norm (by absorbing the
$L^2$ gradient norm of $|\s|^{p/2}$ by the corresponding term on the left-hand side).
This yields a sequence of recursive inequalities of the type
$$
  \frac{dz_p}{dt} \le c_1 pz_{p/2}^2 + c_2, 
  \quad\mbox{where }z_p=\|\s\|_{L^p(\Omega)}^p.
$$
The strategy of the rest of the proof is to derive iteratively bounds
for $z_{2^m}$ for all $m\in\N$, which are uniform in $m$, and to pass to 
the limit $m\to\infty$. This can be done exactly as in \cite{HPS07}.
This idea goes back to Alikakos \cite{Ali79}. The result is the estimate
$$
  \|\s(\cdot,t)\|_{L^\infty(\Omega)} \le c\max\{1,\|\s^0\|_{L^\infty(\Omega)}\},
	\quad t\ge 0,
$$
where the constant $c>0$ only depends on $\|C\|_{L^\infty(\Omega)}$.
\qed
\end{remark}

%%%%%%%%%%%%%%%%%%%%%%%%%%%%%%%%%%%%%%%%%%%%%%%%%%%%%%%%%%%%%%%%%%%%%%%%%%%%%%

\section{Free energy estimate}\label{sec.en}

We show that the relative free energy \eqref{1.E} is nonincreasing in time
under certain conditions. First, we comment on the spin contribution of the
energy. It comes from the von-Neumann entropy density
$\mbox{tr}(N\log N-N)$, where ``tr'' is the trace of a matrix and
$N=\rho\sigma_0 + \s\cdot\boldsymbol{\sigma}$ is the density matrix,
which is a Hermitian $2\times 2$ matrix.
Here, $\sigma_0$ denotes the identity matrix and $\boldsymbol{\sigma}
=(\sigma_1,\sigma_2,\sigma_3)$ is the vector of the Pauli matrices (see
\cite[Formula (1)]{PoNe11} for a definition). We may decompose $N$ according to
$N=\rho_+\Pi_+ + \rho_-\Pi_-$, where $\rho_\pm = \rho\pm|\s|$ are the
eigenvalues of $N$ and $\Pi_\pm=\frac12(\sigma_0\pm(\s/|\s|)\cdot\boldsymbol{\sigma})$
are the projections on the corresponding eigenspaces, 
satisfying $\Pi_\pm^2=\Pi_\pm$ and $\Pi_+\Pi_-=0$. Then, by spectral theory,
$$
  N\log N-N = \rho_+(\log\rho_+-1) + \rho_-(\log\rho_--1),
$$
which is the expression used in \eqref{1.E}.

\begin{proposition}[Monotonicity of the free energy]\label{prop.en}
Let $(\rho,\s,\E,\H,\m)$ be a smooth solution to \eqref{1.m}-\eqref{1.bc.rhoN}
satisfying $\rho>|\s|$. Furthermore, let $\|\rho(t)\|_{L^\infty(\Omega)}\le M(T)$,
where $M(T)>0$ does not depend on $\beta$ but possibly on $T$ (this is guaranteed
by Lemma \ref{lem.infty}). 
If $\beta^2 \le 4\alpha/(\tau M(T)(1+\alpha^2))$ then
the free energy \eqref{1.E} fulfills the inequality
$$
  \frac{dE}{dt} + \frac{1}{2}\int_\Omega D\big(\rho_+|\na\log\rho_+ - \E|^2
	+ \rho_-|\na\log\rho_- - \E|^2\big)dx \le 0, \quad 0\le t\le T.
$$
\end{proposition}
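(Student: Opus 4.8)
The strategy is to differentiate each of the three parts of $E(t)$ in \eqref{1.E} separately and to combine the resulting identities, using the equations \eqref{1.m}--\eqref{1.bc.rhoN} to rewrite the time derivatives. First I would handle the spin/charge contribution. Introducing the chemical-potential-like quantities $\mu_\pm = \log\rho_\pm - E$ (understood componentwise with a suitable scalar potential for $\E$; since $\E$ need not be a gradient, it is cleaner to keep $\E$ itself and integrate by parts), I expect to obtain, after using \eqref{1.rho}--\eqref{1.s} and the boundary conditions \eqref{1.bc.rhoD}--\eqref{1.bc.rhoN}, an identity of the form
\begin{equation*}
  \frac{d}{dt}\Big(\tfrac12\int_\Omega\big(\rho_+(\log\rho_+-1)+\rho_-(\log\rho_--1)-2\log\rho_D(\rho-\rho_D)\big)dx\Big)
  = -\tfrac12\int_\Omega D\big(\rho_+|\na\log\rho_+-\E|^2 + \rho_-|\na\log\rho_--\E|^2\big)dx + R_1 + R_2,
\end{equation*}
where $R_1$ collects the contribution of the spin-flip relaxation term $-\s/\tau$ and $R_2$ the contribution of the rotation term $\gamma\,\m\times\s$. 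The key algebraic observation needed here is that $N\mapsto N\log N - N$ has derivative $\log N$, and that $\gamma\,\m\times\s$ is orthogonal to $\s$, so that in the spectral decomposition $N=\rho_+\Pi_++\rho_-\Pi_-$ the rotation term contributes only through the off-diagonal / eigenprojection motion; I anticipate that $R_2$ either vanishes or is controlled by $\beta$-independent quantities, while $R_1 = -\tfrac1\tau\int_\Omega(\text{something nonnegative in }\rho_\pm)\,dx$ up to a harmless term.

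Next I would differentiate the electromagnetic part $\tfrac12\int_\Omega(|\E-\log\rho_D|^2+|\H|^2)dx$. Using \eqref{1.M1}, $\pa_t\E=\curl\H+\Je$ and $\pa_t\H=-\curl\E-\pa_t\m$, the curl terms cancel after integration by parts against $\E-\log\rho_D$ and $\H$ respectively (using \eqref{1.bc.M1}), leaving $\int_\Omega\Je\cdot(\E-\log\rho_D)\,dx - \int_\Omega\pa_t\m\cdot\H\,dx$. The first of these, with $\Je=D(\na\rho-\rho\E)$, is exactly what is needed to close the dissipation square together with the entropy term (here the identity $\na\rho = \tfrac12\na\rho_+ + \tfrac12\na\rho_-$ and $\rho\E = \tfrac12\rho_+\E+\tfrac12\rho_-\E$ on the charge component is used); the term $\int_\Omega\pa_t\m\cdot\H$ will be cancelled by a matching term from the magnetization energy. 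Then I would differentiate $\tfrac12\int_\omega|\na\m|^2\,dx = -\int_\omega\Delta\m\cdot\pa_t\m\,dx$ (using \eqref{1.ibc.m}), and insert \eqref{1.m}. Pairing $\pa_t\m$ with $-\alpha\m\times(\m\times(\Delta\m+\H+\beta\s))$ and using $|\m|=1$, $\pa_t\m\cdot\m=0$, and the Landau--Lifshitz structure yields the standard dissipative term $-\tfrac1\alpha\int_\omega|\pa_t\m|^2$ (or $-\alpha\int_\omega|\m\times(\Delta\m+\H+\beta\s)|^2$), plus cross terms $\int_\omega\pa_t\m\cdot\H$ (cancelling the Maxwell leftover) and $\beta\int_\omega\pa_t\m\cdot\s$.

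Collecting everything, the proof reduces to showing that the genuinely dangerous leftover terms, namely the spin-flip term $\tfrac1\tau(\cdots)$ paired with $\s$ and the $\beta$-coupling term $\beta\int_\omega\pa_t\m\cdot\s$, are dominated by the nonnegative dissipation $\tfrac1\alpha\int_\omega|\pa_t\m|^2$ (from the LLG energy) together with a fraction of the von-Neumann dissipation. This is where the hypothesis $\beta^2\le 4\alpha/(\tau M(T)(1+\alpha^2))$ enters: I would estimate $\beta\int_\omega\pa_t\m\cdot\s$ by Young's inequality, $\beta|\pa_t\m\cdot\s|\le \tfrac1{2\alpha}|\pa_t\m|^2 + \tfrac{\alpha\beta^2}{2}|\s|^2$, absorb the first piece into the LLG dissipation, and control $\tfrac{\alpha\beta^2}{2}|\s|^2$ against the spin-flip gain $\tfrac1\tau|\s|^2$ using $|\s|\le\rho\le M(T)$ — this is exactly the balance forcing the stated bound on $\beta^2$, with the factor $1+\alpha^2$ coming from bounding $|\pa_t\m|^2$ in terms of $|\m\times(\Delta\m+\H+\beta\s)|^2$ via the identity $|\pa_t\m|^2=(1+\alpha^2)|\m\times(\Delta\m+\H+\beta\s)|^2$ (since $\pa_t\m = \mathbf v - \alpha\m\times\mathbf v$ with $\mathbf v\perp\m\times\mathbf v$). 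The main obstacle I expect is the careful bookkeeping in the first step: rewriting $\tfrac{d}{dt}\int\mathrm{tr}(N\log N-N)$ in terms of $\rho_\pm$ and producing the two perfect squares $\rho_\pm|\na\log\rho_\pm-\E|^2$ correctly, especially handling the $\na(\s/|\s|)$ contributions (formula \eqref{nas}) and verifying that the rotation term $\gamma\,\m\times\s$ makes no net contribution to the energy balance because it is an orthogonal (energy-conserving) rotation of $\s$ — this requires the smoothness and $\rho>|\s|$ assumptions to make $\log\rho_\pm$ and the eigenprojections differentiable.
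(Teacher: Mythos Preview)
Your proposal is correct and follows essentially the same route as the paper: split $E$ into its spin, electromagnetic, and exchange parts, differentiate each using \eqref{1.rho}--\eqref{1.s}, \eqref{1.M1}, and \eqref{1.m}, combine to produce the two dissipative squares $\rho_\pm|\na\log\rho_\pm-\E|^2$ together with the spin-flip term and an LLG cross term in $\beta$, then absorb the latter via Young's inequality and balance it against the spin-flip gain using $\rho\le M(T)$. Two small corrections: the paper writes the LLG leftover as $-\alpha\int_\omega|(\H+\Delta\m)\times\m|^2\,dx-\beta\int_\omega((\H+\Delta\m)\times\m)\cdot(\s-\alpha\m\times\s)\,dx$ (equivalent to your $-\alpha\int_\omega|\m\times\H_{\rm eff}|^2\,dx+\beta\int_\omega\s\cdot\pa_t\m\,dx$), and you should take the Young constants $\tfrac{\alpha}{1+\alpha^2}$ and $\tfrac{\beta^2(1+\alpha^2)}{4\alpha}$ (not $\tfrac{1}{2\alpha}$, $\tfrac{\alpha\beta^2}{2}$) and note that the spin-flip gain is $\tfrac{1}{2\tau}\int_\Omega|\s|\log\tfrac{\rho_+}{\rho_-}\,dx\ge\int_\Omega\tfrac{|\s|^2}{\tau\rho}\,dx\ge\tfrac{1}{\tau M(T)}\int_\Omega|\s|^2\,dx$ to recover the stated bound on $\beta$.
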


In this proposition, the diffusion constant $D=D(x)$ is allowed to depend on $x$.

\begin{proof}
We denote the von-Neumann entropy part by $E_{\rm spin}$, 
the electromagnetic energy by $E_{\rm em}$, and the exchange energy by $E_{\rm ex}$.
By computing the time derivative of $E_{\rm spin}$ and employing 
\eqref{1.rho}-\eqref{1.s} and $2\rho=\rho_++\rho_-$, we find that
\begin{align*}
  \frac{dE_{\rm spin}}{dt}
	&= \frac12\int_\Omega D\bigg(\pa_t\rho_+\log\frac{\rho_+}{\rho_D}
	+ \pa_t\rho_-\log\frac{\rho_-}{\rho_D}\bigg)dx \\
	&= \frac12\int_\Omega D\bigg(\pa_t\rho\log\frac{\rho_+\rho_-}{\rho_D^2}
	+ \frac{\s}{|\s|}\cdot\pa_t\s\log\frac{\rho_+}{\rho_-}\bigg)dx \\
	&= -\frac12\int_\Omega D\na\log\frac{\rho_+\rho_-}{\rho_D^2}
	\cdot(\na\rho-\rho\E)dx 
	- \frac12\int_\Omega D\sum_{j=1}^3
	\frac{s_j}{|\s|}\na\log\frac{\rho_+}{\rho_-}
	\cdot(\na s_j-s_j\E)dx \\
	&\phantom{xx}{}- \frac12\int_\Omega D\sum_{j=1}^3
	\log\frac{\rho_+}{\rho_-}\na\bigg(\frac{s_j}{|\s|}\bigg)
	\cdot(\na s_j-s_j\E)dx 
	- \frac12\int_\Omega\frac{|\s|}{\tau}\log\frac{\rho_+}{\rho_-}dx \\
	&= I_1 + I_2 + I_3 + I_4.
\end{align*}
The second integral becomes
$$
  I_2	= -\frac12\int_\Omega D\na\log\frac{\rho_+}{\rho_-}\cdot(\na|\s|-|\s|\E)dx.
$$
Taking into account \eqref{nas}, 
we can reformulate a part of the integrand of $I_3$:
\begin{align*}
  \sum_{j=1}^3 & \na\bigg(\frac{s_j}{|\s|}\bigg)\cdot(\na s_j-s_j\E)
	= \frac{1}{|\s|}\sum_{j,k=1}^3\bigg(\delta_{jk} - \frac{s_js_k}{|\s|^2}\bigg)
  \na s_j\cdot\na s_k \\
	&{}
	- \frac{1}{|\s|}\sum_{j,k=1}^3\bigg(\delta_{jk} - \frac{s_js_k}{|\s|^2}\bigg)
	s_j\na s_k\cdot \E 
	= \frac{1}{|\s|}\sum_{j,k=1}^3\bigg(\delta_{jk} - \frac{s_js_k}{|\s|^2}\bigg)
  \na s_j\cdot\na s_k.
\end{align*}
The matrix $A=(a_{jk})$, defined by $a_{jk}=\frac12(\delta_{jk}-s_js_k/|\s|^2)$,
is a projection and satisfies $A^2=A$. Consequently,
\begin{align*}
  \sum_{j=1}^3\na\bigg(\frac{s_j}{|\s|}\bigg)\cdot(\na s_j-s_j\E)
	&= \frac{2}{|\s|}\sum_{i=1}^3\frac{\pa \s}{\pa x_i}A \frac{\pa \s}{\pa x_i}
	= \frac{2}{|\s|}\sum_{i=1}^3\frac{\pa \s}{\pa x_i}A^2 \frac{\pa \s}{\pa x_i} \\
	&= \frac{2}{|\s|}|A\na\s|^2 = 2|\s|\Big|\na\frac{\s}{|\s|}\Big|^2,
\end{align*}
and we infer that
$$
  I_3 = -\int_\Omega D\Big|\na\frac{\s}{|\s|}\Big|^2|\s|
	\log\frac{\rho_+}{\rho_-}dx.
$$
Then combining the integrals $I_1$ and $I_2$, we obtain
\begin{align*}
  \frac{dE_{\rm spin}}{dt}
	&= -\frac12\int_\Omega D\Big((\na\log\rho_+ - \na\log\rho_D)
	\cdot(\na\rho_+ - \rho_+\E) \\
	&\phantom{xxxxx}{}+ (\na\log\rho_- - \na\log\rho_D)
	\cdot(\na\rho_- - \rho_-\E)\Big)dx \\
	&\phantom{xx}{}
	- \int_\Omega\bigg(\frac{1}{2\tau}+\Big|\na\frac{\s}{|\s|}\Big|^2\bigg)
	|\s|\log\frac{\rho_+}{\rho_-}dx.
\end{align*}

Next, we compute the time derivatives of $E_{\rm em}$ and $E_{\rm ex}$:
\begin{align*}
  \frac{dE_{\rm em}}{dt}
	&= \int_\Omega D(\na\rho-\rho\E)\cdot(\E-\na\log\rho_D)
	- \int_\omega\H\cdot\pa_t\m dx \\
	&= \frac12\int_\Omega D\Big((\na\rho_+ - \rho_+\E)\cdot(\E-\na\log\rho_D) \\
	&\phantom{xx}{}
	+ (\na\rho_- - \rho_-\E)\cdot(\E-\na\log\rho_D)\Big)dx 
	- \int_\omega\H\cdot\pa_t\m dx, \\
	\frac{dE_{\rm ex}}{dt} 
	&= \int_\omega\na\m\cdot\na\pa_t\m dx dx = -\int_\omega\Delta\m\cdot\pa_t\m dx.
\end{align*}
Adding all time derivatives, the terms involving $\na\log\rho_D$ cancel
and we end up with
\begin{align}
  \frac{dE}{dt}
	&= -\frac12\int_\Omega D\Big(\rho_+|\na\log\rho_+ - \E|^2
	+ \rho_-|\na\log\rho_- - \E|^2\Big)dx \label{en.aux1} \\
	&\phantom{xx}{}
	- \int_\Omega\bigg(\frac{1}{2\tau}+\left|\na\frac{\s}{|\s|}\right|^2\bigg)
	|\s|\log\frac{\rho_+}{\rho_-}dx
	- \int_\omega(\H+\Delta\m)\cdot\pa_t\m dx. \nonumber
\end{align}
We employ the LLG equation to reformulate the last integral:
\begin{align}
  -\int_\omega&(\H+\Delta\m)\cdot\pa_t\m dx \label{en.aux2} \\
	&= -\int_\omega\Big((\H+\Delta\m)\cdot(\m\times\beta\s)
	+ \alpha|(\H+\Delta\m)\times\m|^2 \nonumber \\
	&\phantom{xx}{}- \alpha((\H+\Delta\m)\times\m)\cdot(\m\times\beta\s)\Big)dx 
	\nonumber \\
	&= -\alpha\int_\omega|(\H+\Delta\m)\times\m|^2 dx 
	- \beta\int_\omega((\H+\Delta\m)\times\m)\cdot(\s-\alpha\m\times\s)dx.
	\nonumber
\end{align}

At this point, we need to make some estimates. Applying Young's inequality
to the last integral, it follows that
\begin{align*}
  -\beta\int_\omega & ((\H+\Delta\m)\times\m)\cdot(\s-\alpha\m\times\s)\Big)dx \\
	&\le \alpha\int_\omega |(\H+\Delta\m)\times\m|^2 dx 
	+ \frac{\beta^2}{4\alpha}\int_\omega|\s-\alpha\m\times\s|^2 dx \\
	&= \alpha\int_\omega |(\H+\Delta\m)\times\m|^2 dx 
	+ \frac{\beta^2}{4\alpha}\int_\omega\big(|\s|^2 + \alpha^2|\m|^2|\s|^2\big)dx \\
	&= \alpha\int_\omega |(\H+\Delta\m)\times\m|^2 dx 
	+ \beta^2\frac{1+\alpha^2}{4\alpha}\int_\omega|\s|^2 dx.
\end{align*}
Thus, \eqref{en.aux2} becomes
$$
  -\int_\omega(\H+\Delta\m)\cdot\pa_t\m dx
	\le \beta^2\frac{1+\alpha^2}{4\alpha}\int_\omega|\s|^2 dx.
$$
Since $\log((1+z)/(1-z))\ge 2z$ for $0<z<1$, we estimate
$$
  I_4 = -\int_\Omega \frac{|\s|}{2\tau}\log\frac{\rho_+}{\rho_-}dx
	= -\int_\Omega \frac{|\s|}{2\tau}\log\frac{1+|\s|/\rho}{1-|\s|/\rho}dx
	\le -\int_\Omega\frac{|\s|^2}{\tau\rho}dx.
$$
Inserting these estimates into \eqref{en.aux1}, we arrive at
\begin{align*}
  \frac{dE}{dt}
	&+ \frac12\int_\Omega D\Big(\rho_+|\na\log\rho_+ - \E|^2
	+ \rho_-|\na\log\rho_- - \E|^2\Big)dx \\
	&\le \int_\omega\bigg(\beta^2\frac{1+\alpha^2}{4\alpha} - \frac{1}{\tau\rho}
	\bigg)|\s|^2 dx.
\end{align*}
Since $\rho\le M(T)$, the result follows.
\end{proof}

%%%%%%%%%%%%%%%%%%%%%%%%%%%%%%%%%%%%%%%%%%%%%%%%%%%%%%%%%%%%%%%%%%%%%%%%%%%%%%

\end{document}